\newcommand{\ncom}{\newcommand}     
\newcommand*{\Le}{\leqslant}
\newcommand*{\Ge}{\geqslant}
\ncom{\beqn}{\begin{eqnarray*}}
\ncom{\eeqn}{\end{eqnarray*}}
\ncom{\beq}{\begin{eqnarray}}
\ncom{\eeq}{\end{eqnarray}}
\ncom{\inp}[2]{\langle{#1},\,{#2} \rangle}
\ncom{\rar}{\rightarrow}
\newtheorem{theorem}{Theorem}[section]
\newtheorem{lemma}[theorem]{Lemma}
\newtheorem{proposition}[theorem]{Proposition}
\newtheorem{corollary}[theorem]{Corollary}
\theoremstyle{definition}
\newtheorem{definition}[theorem]{Definition}
\theoremstyle{remark}
\newtheorem{remark}[theorem]{Remark}
     \title[A transference principle]{A transference principle for involution-invariant \\functional Hilbert spaces}
\author[S. Bera, S. Chavan, S. Jain]{Santu Bera, Sameer Chavan, Shubham Jain}
\address{Department of Mathematics and Statistics\\
Indian Institute of Technology Kanpur, India}
\email{santu20@iitk.ac.in}
  \email{chavan@iitk.ac.in}
\address{Department of Mathematics, Indian Institute of Technology Guwahati, Guwahati, 781039, India}
   \email{shubjainiitg@iitg.ac.in}
\thanks{The first author is supported through the PMRF Scheme (2301352).}
\keywords{proper map, involution, symmetrized bidisc, tetrablock, fat Hartogs triangle, egg domain, Hardy space}
\subjclass[2020]{Primary 32A36, 47A13; Secondary 32A10, 46E22}
\begin{document}

\begin{abstract}  
Let $\sigma : \mathbb C^d \rar \mathbb C^d$ be an affine-linear involution such that $J_\sigma = -1$ and let $U, V$ be two domains in $\mathbb C^d.$ Let $\phi : U \rar V$ be a $\sigma$-invariant $2$-proper map such that $J_\phi$ is affine-linear and  
let $\mathscr H(U)$ be a $\sigma$-invariant reproducing kernel Hilbert space of complex-valued holomorphic functions on $U.$  It is shown that the space 
$\mathscr H_\phi(V):=\{f \in \mathrm{Hol}(V) : J_\phi \cdot f \circ \phi \in  \mathscr H(U)\}$
endowed with the norm 
$\|f\|_\phi :=\|J_\phi \cdot f \circ \phi\|_{\mathscr H(U)}$
is a reproducing kernel Hilbert space and the linear mapping $\varGamma_\phi$ defined by 
$
\varGamma_\phi(f) = J_\phi \cdot f \circ \phi,$ $f \in \mathrm{Hol}(V),$  
 is a unitary from $\mathscr H_\phi(V)$ onto $\{f \in \mathscr H(U) : f = -f \circ \sigma\}.$ Moreover, a neat formula for the reproducing kernel $\kappa_{\phi}$ of $\mathscr H_\phi(V)$ in terms of the reproducing kernel of $\mathscr H(U)$ is given.  
The above scheme is applicable to symmetrized bidisc, tetrablock, $d$-dimensional fat Hartogs triangle and $d$-dimensional egg domain. Although some of these are known, this allows us to obtain an analog of von Neumann's inequality for contractive tuples naturally associated with these domains.  
\end{abstract} 
\maketitle

\section{Introduction and statement of the main theorem}
In this paper, we address the problem of constructing a Hardy space of a given domain in the $d$-dimesional Euclidean space (see \cite{CJP2023, GGLV2021, MRZ2013, M2021}). In particular, we provide an alternate approach to this problem based on the theory of reproducing kernel Hilbert spaces. Apart from a motivation from the function-theory, the existence of a reasonable Hardy space (cf. conditions (a) and (b) on \cite[p.~2172]{GGLV2021}) allows us to obtain an analog of von Neumann's inequality for a class of operator tuples naturally associated with the domain in question (cf. \cite[Theorem~2.1]{CJP2022}). In simple words, the main result of this paper allows us to construct a Hardy space $H^2(\Omega_2)$ from a known Hardy space $H^2(\Omega_1)$ of a domain $\Omega_1$ in various situations in which $\Omega_2$ is a $2$-proper image of $\Omega_1.$ Our construction is based on a circle of ideas in \cite{BDGR2022, G2021, MRZ2013, T2013}, and it is so general that it also applies to Bergman spaces (see \cite{K2001}). Another important aspect of this principle is that it gives a transformation formula relating the reproducing kernels in question.   

For a set $A,$ $\mbox{card}(A)$ denotes the cardinality of $A.$  For a positive integer $d,$ let $\mathbb C^d$ denote the $d$-fold cartesian product of the complex plane $\mathbb C.$ The unit polydisc centred at the origin in $\mathbb C^d$ is denoted by $\mathbb D^d.$ A map $\psi : \mathbb C^d \rar \mathbb C^d$ is {\it affine-linear} if there exists a linear operator $A$ on $\mathbb C^d$ and $B \in \mathbb C^d$ such that $\psi(z)=Az+B,$ $z \in \mathbb C^d.$ Let $U$ be a domain in $\mathbb C^d,$ that is, a nonempty open connected subset of $\mathbb C^d$. The set of zeros of a function $f : U \rar \mathbb C$ is denoted by $Z(f).$ 
Let $V$ be a domain in $\mathbb C^d.$ 
The {\it Jacobian} of a holomorphic map $\phi : U \rar V$ is the determinant of the {\it Jacobian matrix} of $\phi$ and it is denoted by $J_\phi.$ A continuous map $\phi : U \rar V$ is {\it proper} if pre-image under
$\phi$ of any compact subset of $V$ is compact. 
Let $\phi : U \rar V$ be a proper holomorphic map. It is noticed in \cite[Section~1.2]{R1982} that there exists a positive integer $m$ (to be referred to as the {\it multiplicity} of $\phi$) such that  
\beq \label{admissible-prop}
&&\mbox{card}(\phi^{-1}(\{z\})) = m  ~\mbox{for every~} z \in V \backslash \phi(Z(J_\phi)), \notag\\ 
&& \mbox{card}(\phi^{-1}(\{z\})) < m  ~\mbox{for every~} z \in \phi(Z(J_\phi)).
 \eeq
For short, we refer to a proper map of multiplicity $m$ as {\it $m$-proper map}.
A continuous map $\sigma : \mathbb C^d \rar \mathbb C^d$  is said to be an {\it involution} if $\sigma \circ \sigma = I.$ 
We reserve the notation $\mathrm{Fix}(\sigma)$ for the set $\{z \in U : \sigma(z)=z\}$ of fixed points of $\sigma.$
We say that $U$ is {\it $\sigma$-invariant} if $\sigma(z) \in U$ for every $z \in U.$ The map $\phi : U \rightarrow V$ is said to be {\it $\sigma$-invariant} if $U$ is $\sigma$-invariant and $\phi \circ \sigma = \phi.$ 
\begin{remark} 
Let $U, V$ be domains in $\mathbb C^d$ and $\phi : U \rar V$ be a holomorphic $2$-proper map. It was shown in the proof of \cite[Proposition~2.3]{GZ2024} that there exists a holomorphic involution $\sigma : U \rar U$ such that
\beq \label{3-prop}
\mbox{$\sigma$ is not the identity map, $\phi \circ \sigma = \phi$ and $\mathrm{Fix}(\sigma)=Z(J_\phi).$}
\eeq
To see the uniqueness, let $\sigma_j,$ $j=1, 2$ be two holomorphic involutions satisfying \eqref{3-prop}. Let $w \in U \setminus Z(J_\phi),$ and note that $\sigma_j(w) \neq w,$ $j=1, 2.$ If $\phi(w) \in \phi(Z(J_\phi)),$ then $\phi(\sigma_j(w)) = \phi(w) \in \phi(Z(J_\phi)),$ and hence by \eqref{admissible-prop}, $\sigma_j(w)=w,$ $j=1, 2.$ This contradiction shows that $\phi(w) \notin \phi(Z(J_\phi)).$ However, $\phi$ being $2$-proper, $\phi(w)$ has $2$ preimages under $\phi.$ Consequently, $\sigma_1=\sigma_2.$
\end{remark}

 Let $U$ be a domain in $\mathbb C^d.$ The linear space of complex-valued holomorphic functions on $U$ is denoted by $\mathrm{Hol}(U).$ 
If $U$ is bounded, then we reserve the notation $H^{\infty}(U)$ for the Banach algebra of complex-valued bounded holomorphic functions on $U$ endowed with the supremum norm $\|\cdot\|_{\infty, U}.$  
By a {\it functional Hilbert space} $\mathscr H(U)$, we understand a reproducing kernel Hilbert space (for short, RKHS) of complex-valued holomorphic functions on $U$ (for the definition of RKHS, see \cite[Definition~1.1]{PR2016}). For any functional Hilbert space $\mathscr H(U),$ there exists a positive semi-definite kernel $\kappa : U \times U \rar \mathbb C$ such that  
\beq
\label{rproperty}
\inp{f}{\kappa(\cdot, w)}=f(w), \quad f \in \mathscr H(U), ~w \in U
\eeq
(refer to \cite{PR2016} for the basic theory of RKHS). A {\it multiplier of $\mathscr H(U)$} is a function $f : U \rar \mathbb C$ such that $f \cdot g \in \mathscr H(U)$ for every $g \in \mathscr H(U).$ The {\it multiplier algebra} $\mathcal M(\mathscr H(U))$ is the set of multipliers of $\mathscr H(U).$
By the closed graph theorem and  \eqref{rproperty}, any multiplier $f$ yields the operator $\mathscr M_f \in \mathcal B(\mathscr H(U))$ of multiplication by $f,$ where $\mathcal B(\mathcal H)$ denotes the $C^*$-algebra of bounded linear operators on a Hilbert space $\mathcal H.$ Moreover, the operator norm makes $\mathcal M(\mathscr H(U))$ a unital
subalgebra of $\mathcal B(\mathscr H(U))$ that is closed in the weak operator topology (see \cite[Corollary 5.24]{PR2016}).

We now define a class of functional Hilbert spaces central to this paper (cf. \cite[p.~753]{BGMR2019}).    
%\begin{definition} 
Let $U$ be a $\sigma$-invariant domain.   
A functional Hilbert space
$\mathscr H(U)$ is said to be {\it $\sigma$-invariant} if for every $f \in \mathscr H(U),$ $f \circ \sigma \in \mathscr H(U)$ and 
\beq \label{sigma-invariant}
\inp{f \circ \sigma}{g \circ \sigma} = \inp{f}{g}, \quad f, g \in \mathscr H(U).
\eeq
Define the 
 subspaces (cf. \cite[p.~2]{G2021}) $\mathscr H^{\sigma}_{+}(U)$ and $\mathscr H^{\sigma}_{-}(U)$ of $\mathscr H(U)$ by  
\beqn
\mathscr H^{\sigma}_{+}(U) := \{f \in \mathscr H(U) : f = f \circ \sigma\}, ~
\mathscr H^{\sigma}_{-}(U) := \{f \in \mathscr H(U) : f = -f \circ \sigma\}.
\eeqn
%\end{definition}
\begin{remark} 
%\label{rmk-kernel-plus}
Assume that $\mathscr H(U)$ is a $\sigma$-invariant functional Hilbert space. Then $\mathscr H^{\sigma}_+(U)$ and $\mathscr H^{\sigma}_-(U)$ are closed subspaces of $\mathscr H(U).$ This follows from the fact that the convergence of a sequence in an RKHS implies its pointwise convergence. 
%Thus, $\mathscr H^{\sigma}_+(U)$ and $\mathscr H^{\sigma}_-(U)$ are reproducing kernel Hilbert spaces. 
Moreover, $\mathscr H(U) = \mathscr H^{\sigma}_+(U) \oplus \mathscr H^{\sigma}_{-}(U)~\mathrm{(the~ orthogonal ~direct ~sum)}$ and the reproducing kernels $\kappa^{\sigma}_{+}$ and $\kappa^{\sigma}_{-}$ of $\mathscr H^{\sigma}_{+}(U)$ and $\mathscr H^{\sigma}_{-}(U),$ respectively, are given by
\beq
\label{kappa-s-minus}
 \left.
 \begin{array}{ccc}
\kappa^{\sigma}_{+}(z, w) &=&\frac{1}{2}\Big(\kappa(z, w) + \kappa(\sigma(z), w)\Big) \\
\kappa^{\sigma}_{-}(z, w) &=& \frac{1}{2}\Big(\kappa(z, w) - \kappa(\sigma(z), w)\Big) 
 \end{array}
\right\},\quad z, w \in U.
\eeq 
A general version of the decomposition above has been recorded in
\cite[p.~754]{BGMR2019}, while the ``moreover" part may be easily derived from \eqref{rproperty} and the fact that any RKHS admits a unique reproducing kernel.   
\end{remark}

The following is the main result of this paper (a number of similar instances of this have been discussed in the literature; \cite{BGMR2019, G2021, GR2024, MRZ2013, T2013}). 
\begin{theorem} \label{main-result} 
Let $\sigma : \mathbb C^d \rar \mathbb C^d$ be an affine-linear involution and let $U, V$ be two domains in $\mathbb C^d$. Let $\phi : U \rar V$ be a 
$\sigma$-invariant $2$-proper map and let $\mathscr H(U)$ be a 
%$\sigma$-invariant reproducing kernel Hilbert space of complex-valued holomorphic functions on $U$ 
$\sigma$-invariant functional Hilbert space with the reproducing kernel $\kappa : U \times U \rar \mathbb C.$ 
Assume further that $J_\sigma = -1$ and $J_\phi$ is affine-linear.
Then, the linear space \beq \label{H(V)}
\mathscr H_\phi(V):=\{f \in \mathrm{Hol}(V) : J_\phi \cdot f \circ \phi \in  \mathscr H(U)\} 
\eeq 
endowed with the norm
\beq \label{phi-norm}
\|f\|_\phi :=\|J_\phi \cdot f \circ \phi\|_{\mathscr H(U)} 
\eeq
is a Hilbert space. 
%$\phi=(\phi_1, \ldots, \phi_d),$ where $\phi_1, \ldots, \phi_{d-1}$ are homogenous polynomials of degree $1$ and $\phi_d$ is a homogenous polynomial of degree $2$ such that $\phi^{-1}(\{0\})=\{0\}$. 
Moreover, for the linear mapping $\varGamma_\phi$ defined by 
$
\varGamma_\phi(f) = J_\phi \cdot f \circ \phi,$ $f \in \mathrm{Hol}(V),$ we have the following statements$:$
\begin{enumerate}
\item[$(i)$]  $\varGamma_\phi$ 
 is a unitary from $\mathscr H_\phi(V)$ onto $\mathscr H^{\sigma}_{-}(U).$
 \item[$(ii)$]  $\mathscr H_\phi(V)$ is a reproducing kernel Hilbert space with the reproducing kernel $\kappa_{\phi}$ given by
\beq 
\label{formula-ii}
J_\phi(z) \kappa_{\phi}(\phi(z), \phi(w))\overline{J_\phi(w)} = 
\frac{1}{2}\Big(\kappa(z, w) - \kappa(\sigma(z), w)\Big), ~ z, w \in U.
\eeq
 \end{enumerate}
 \end{theorem}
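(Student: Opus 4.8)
The plan is to show that $\varGamma_\phi$ is an isometric isomorphism of $\mathscr H_\phi(V)$ onto the closed subspace $\mathscr H^{\sigma}_{-}(U)$, and then to read off both the Hilbert space structure of $\mathscr H_\phi(V)$ and the kernel formula \eqref{formula-ii} from this identification. The key preliminary observation is obtained by applying the chain rule to $\phi \circ \sigma = \phi$: this gives $J_\phi(z) = J_\phi(\sigma(z))\,J_\sigma(z)$, so the hypothesis $J_\sigma = -1$ yields $J_\phi \circ \sigma = -J_\phi$. In particular, for $z \in \mathrm{Fix}(\sigma)$ we get $J_\phi(z) = -J_\phi(z)$, whence $\mathrm{Fix}(\sigma) \subseteq Z(J_\phi)$; conversely, if some $w \in Z(J_\phi)$ had $\sigma(w) \neq w$, then $w$ and $\sigma(w)$ would be two distinct preimages of $\phi(w) \in \phi(Z(J_\phi))$, contradicting the second line of \eqref{admissible-prop}. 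Thus $\mathrm{Fix}(\sigma) = Z(J_\phi)$ and, for $w \notin Z(J_\phi)$, the fibre $\phi^{-1}(\phi(w))$ equals $\{w, \sigma(w)\}$. I will also use that $\phi$ is surjective (its image is open by the open mapping theorem and closed by properness, while $V$ is connected).

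I would first check that $\varGamma_\phi$ maps $\mathscr H_\phi(V)$ into $\mathscr H^{\sigma}_{-}(U)$. For $f \in \mathscr H_\phi(V)$ we have $\varGamma_\phi f \in \mathscr H(U)$ by \eqref{H(V)}, and using $\phi \circ \sigma = \phi$ together with $J_\phi \circ \sigma = -J_\phi$,
\[
(\varGamma_\phi f)\circ\sigma = (J_\phi \circ \sigma)\,(f \circ \phi \circ \sigma) = -J_\phi\,(f \circ \phi) = -\,\varGamma_\phi f .
\]
Injectivity of $\varGamma_\phi$ (equivalently, that $\|\cdot\|_\phi$ is genuinely a norm) follows because $\varGamma_\phi f = 0$ forces $f \circ \phi = 0$ on the dense set $U \setminus Z(J_\phi)$, hence $f \circ \phi \equiv 0$ and $f \equiv 0$ by surjectivity of $\phi$. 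Since $\|\varGamma_\phi f\|_{\mathscr H(U)} = \|f\|_\phi$ by definition \eqref{phi-norm}, $\varGamma_\phi$ is an isometry onto its range.

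The crux is surjectivity of $\varGamma_\phi$ onto $\mathscr H^{\sigma}_{-}(U)$, and this is where I expect the main difficulty. Given $g \in \mathscr H^{\sigma}_{-}(U)$, anti-invariance forces $g$ to vanish on $\mathrm{Fix}(\sigma) = Z(J_\phi)$; since $J_\phi$ is affine-linear, an affine change of coordinates turns it into a single coordinate function, and one sees that $g$ is divisible by $J_\phi$ in $\mathrm{Hol}(U)$, say $g = J_\phi \cdot h$ with $h \in \mathrm{Hol}(U)$. Combining $J_\phi \circ \sigma = -J_\phi$ with $g \circ \sigma = -g$ gives $h \circ \sigma = h$, so $h$ is $\sigma$-invariant and therefore constant on each fibre $\{w, \sigma(w)\}$. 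Hence $h$ descends to a function $f$ on $V \setminus \phi(Z(J_\phi))$ with $h = f \circ \phi$ there; $f$ is holomorphic because $\phi$ is a local biholomorphism off $Z(J_\phi)$, and $f$ is locally bounded near the analytic set $\phi(Z(J_\phi))$ (by properness of $\phi$ and continuity of $h$), so Riemann's extension theorem produces $f \in \mathrm{Hol}(V)$ with $f \circ \phi = h$ on all of $U$. Then $\varGamma_\phi f = J_\phi\,(f \circ \phi) = J_\phi h = g$ and $\|f\|_\phi = \|g\|_{\mathscr H(U)} < \infty$, so $f \in \mathscr H_\phi(V)$. This proves $(i)$, and completeness of $\mathscr H_\phi(V)$ is then inherited from the Hilbert space $\mathscr H^{\sigma}_{-}(U)$ through the surjective isometry $\varGamma_\phi$.

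For $(ii)$, fix $v \in V$ and choose $w \in \phi^{-1}(v)$ with $J_\phi(w) \neq 0$. For $f \in \mathscr H_\phi(V)$, the identity $f(v) = (\varGamma_\phi f)(w)/J_\phi(w)$, the reproducing property \eqref{rproperty} for $\mathscr H^{\sigma}_{-}(U)$, and unitarity of $\varGamma_\phi$ give
\[
f(v) = \tfrac{1}{J_\phi(w)}\,\inp{\varGamma_\phi f}{\kappa^{\sigma}_{-}(\cdot, w)} = \inp{f}{\tfrac{1}{\overline{J_\phi(w)}}\,\varGamma_\phi^{-1}\kappa^{\sigma}_{-}(\cdot, w)}_\phi .
\]
Thus evaluation at $v$ is bounded, so $\mathscr H_\phi(V)$ is an RKHS with $\kappa_\phi(\cdot, v) = \overline{J_\phi(w)}^{-1}\,\varGamma_\phi^{-1}\kappa^{\sigma}_{-}(\cdot, w)$. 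Writing $v = \phi(w)$ and evaluating this at $v' = \phi(z)$ with $z, w \notin Z(J_\phi)$, and using that $\big(\varGamma_\phi^{-1}\kappa^{\sigma}_{-}(\cdot, w)\big)(\phi(z)) = \kappa^{\sigma}_{-}(z, w)/J_\phi(z)$, yields
\[
J_\phi(z)\,\kappa_\phi(\phi(z), \phi(w))\,\overline{J_\phi(w)} = \kappa^{\sigma}_{-}(z, w) = \tfrac12\big(\kappa(z, w) - \kappa(\sigma(z), w)\big),
\]
which is \eqref{formula-ii} in view of \eqref{kappa-s-minus}. As indicated, the main obstacle is the surjectivity argument, and in particular securing holomorphicity of the descended function $f$ across the hypersurface $\phi(Z(J_\phi))$; this is precisely where the hypotheses $J_\sigma = -1$ and $J_\phi$ affine-linear enter in an essential way.
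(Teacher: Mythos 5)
Your treatment of the Hilbert-space structure and of part (i) is sound and is essentially the paper's own route: you use $J_\phi\circ\sigma=-J_\phi$, the identity $Z(J_\phi)=\mathrm{Fix}(\sigma)$, division of an anti-invariant $g$ by the affine-linear $J_\phi$, $\sigma$-invariance of the quotient $h$, descent of $h$ through the $2$-proper map $\phi$, and a removable-singularity theorem across $\phi(Z(J_\phi))$; the paper packages these same steps as Lemma~\ref{RRST}, Lemma~\ref{Shubham} and Theorem~\ref{Waring-Lagrange}, and your shortcut of deducing completeness from the surjective isometry $\varGamma_\phi$ (rather than the paper's separate Lemma~\ref{fibre-lemma}) is legitimate.

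There is, however, a genuine gap in your part (ii). You begin with ``fix $v\in V$ and choose $w\in\phi^{-1}(v)$ with $J_\phi(w)\neq 0$'', but such a $w$ does not exist when $v\in\phi(Z(J_\phi))$: by $2$-properness and \eqref{admissible-prop}, the fibre over such a $v$ has cardinality $1$, and since $v\in\phi(Z(J_\phi))$ that unique preimage lies in $Z(J_\phi)$. Consequently your argument proves boundedness of point evaluations, and the kernel identity, only at points of $V\setminus\phi(Z(J_\phi))$; it says nothing about evaluation at the critical values, which is exactly what is needed to conclude that $\mathscr H_\phi(V)$ is an RKHS on all of $V$ (note that \eqref{formula-ii} is trivially $0=0$ when $z$ or $w$ lies in $Z(J_\phi)$, so it carries no information there). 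This is not a cosmetic issue: it is precisely the point the paper has to work for, and it fills it with a separate functional-analytic step (credited to the referee) showing that the inclusion $\mathscr H_\phi(V)\hookrightarrow \mathrm{Hol}(V)$ is continuous --- the set-theoretic inverse $T$ of $\varGamma_\phi:\mathrm{Hol}(V)\rar H:=\{f\in\mathrm{Hol}(U): f=-f\circ\sigma\}$ is continuous by the bounded inverse theorem for Fr\'echet spaces, and the inclusion factors as $T\circ(\mathscr H^\sigma_-(U)\hookrightarrow H)\circ\varGamma_\phi$ --- whence evaluations at \emph{all} points of $V$ are bounded, and the reproducing property at critical values then follows by a density argument. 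Your proof could be repaired in the same spirit (for instance, bounded evaluations on the dense open set $V\setminus\phi(Z(J_\phi))$ give the inclusion $\mathscr H_\phi(V)\hookrightarrow\mathrm{Hol}(V)$ a closed graph, and the closed graph theorem from a Hilbert space into the Fr\'echet space $\mathrm{Hol}(V)$ yields continuity), but some such argument must be supplied; as written, the claim that $\mathscr H_\phi(V)$ is a reproducing kernel Hilbert space is not established.
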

\begin{remark} \label{remark-new} 
%Under the assumptions of Theorem~\ref{main-result}, 
We make several remarks for future reference.
\begin{enumerate}
\item[(a)] Since $\phi$ is proper, $J_\phi \neq 0$ (see the first paragraph on \cite[p.~301]{R1980}). 
\item[(b)] Since $\phi$ is $\sigma$-invariant and $J_\sigma=-1,$ by the chain rule, $J_\phi \circ \sigma =- J_\phi,$ and hence $J_\phi$ is nonconstant. 
\item[(c)] Since the zero set of $J_\phi$ has empty interior (by (a)), by the open mapping theorem, $\|\cdot\|_\phi$ defines a norm on the linear space $\mathscr H_\phi(V).$
\item[(d)] Since $\phi$ is $\sigma$-invariant, by (b), $\varGamma_\phi$ maps $\mathscr H_\phi(V)$ into $\mathscr H^\sigma_{-}(U)$. 
\item[(e)] It may happen that $\mathscr H(U)= \mathscr H_\phi(V).$ Indeed, if $U=V=\mathbb D^2,$ $\mathscr H(U)$ is the Hardy space of $\mathbb D^2,$ $\sigma(z_1, z_2)=(-z_1, z_2)$ and $\phi(z_1, z_2)=(z^2_1, z_2),$ then $\kappa(z, w)=4\kappa_{\phi}(z, w)=\frac{1}{(1-z_1 \overline{w}_1)(1-z_2 \overline{w}_2)},$ $z=(z_1, z_2), w=(w_1, w_2) \in \mathbb D^2.$
%Applying Theorem~\ref{main-result} to a weighted Bergman space, one can see that 
\end{enumerate}
\end{remark} 

The formula \eqref{formula-ii} can be seen as a variant of Bell's formula for $2$-proper maps (cf. \cite[Theorem~1]{B1982}, \cite[Corollary~1]{T2013}). 
One of the ingredients of the proof of Theorem~\ref{main-result} is a variant of the removable singularity theorem (see Lemma~\ref{RRST}). Another ingredient in the proof of Theorem~\ref{main-result} is an abstract version of the Waring-Lagrange theorem (see \cite[Theorem~7.1]{AMY2020}) for $\sigma$-invariant holomorphic functions $($see Theorem~\ref{Waring-Lagrange}$).$ For another variant of the Waring-Lagrange theorem, see \cite[Theorem 3.1 \& Subsection~3.1.1]{BDGR2022}.
It is worth noting that a rather special case of Theorem~\ref{Waring-Lagrange} (when $\sigma$ is linear) can be deduced from \cite[Theorem 3.1 \& Subsection~3.1.1]{BDGR2022} and \cite[Proposition~2.2]{G2021}. On the other hand, our proof of Theorem~\ref{Waring-Lagrange} is of analytic flavor and it capitalizes on one of variants of the classical Riemann removable singularity theorem (for short, RRST). Note that the idea of applying $L^2$-version of RRST (see \cite[Theorem~4.2.9]{JP2000}) in obtaining a transformation rule for Bergman kernels appears in \cite{B1982, T2013}. This argument falls short in our case as the spaces under consideration are {\it abstract} functional Hilbert spaces. 
In the first half of Section~\ref{S3}, we apply Theorem~\ref{main-result} to the symmetrized bidisc and the tetrablock to recover known results from the literature (see \cite{BGMR2019, G2021, GR2024, MRZ2013, T2013}). However, as far as we know, the following formula for the {\it Szeg$\ddot{\mbox{o}}$ kernel of the tetrablock $\mathbb E$} appears to be unnoticed. For $z, w$ in a nonempty open subset of $\mathbb E,$
\beq
\label{formula-Tetra}
\kappa_{_{\mathbb E}}(z, w) = 
\sum_{k=0}^{\infty} \binom{3/2}{2k+1} 
\frac{(1-\inp{\sigma(z)}{w}_{_{\mathbb C^3}})^{\frac{3}{2}-2k-1} 4^k (z_1z_2-z_3)^k(\overline{w}_1\overline{w}_2-\overline{w}_3)^{k}}{((1-\inp{\sigma(z)}{w}_{_{\mathbb C^3}})^2-4 (z_1z_2-z_3)(\overline{w}_1\overline{w}_2-\overline{w}_3))^{\frac{3}{2}}},
\eeq
where $\sigma(z_1, z_2, z_3)=(z_1, z_2, -z_3)$ and $\inp{\cdot}{\cdot}_{_{\mathbb C^d}}$ denotes the standard inner-product in $\mathbb C^d.$ Recall that the {\it tetrablock} $\mathbb E,$ first appeared in \cite[Definition~1.1]{AWY2007}, is given by
\beq \label{tetra-new} \mathbb E = \{(z_1, z_2, z_3) \in \mathbb C^3 : 1-z_1z-z_2w+z_3zw \neq 0~\mbox{for all~} z,w \in \overline{\mathbb D}\}.
\eeq
In the second half of Section~\ref{S3}, we apply Theorem~\ref{main-result} 
to construct a Hardy space $H^2(\mathbb H^d)$ over 
the {\it $d$-dimensional fat Hartogs triangle} 
\beq \label{fat-H}
\mathbb H^d:=\{(z_1, \ldots, z_{d-1}, z_d) \in \mathbb C^d : |z_1|^2 < \ldots < |z_{d-1}|^2 < |z_d| <1\}. 
\eeq
The space $H^2(\mathbb H^d)$ is the RKHS associated with the kernel $\kappa_{_{\mathbb H^d}} : \mathbb H^d \times \mathbb H^d \rar \mathbb C$ given by 
\begin{align*}
 \kappa_{_{\mathbb H^d}}(z, w)=\frac{1}{4}\frac{1}{\prod_{j=2}^{d-1}(z_j\overline{w}_j-z_{j-1}\overline{w}_{j-1})}\frac{1}{z_d\overline{w}_d-z_{d-1}^2\overline{w}_{d-1}^2}\frac{1+z_{d-1}\overline{w}_{d-1}}{1-z_d\overline{w}_d}, ~ z, w \in \mathbb H^d,
\end{align*}
where we used the convention that a product over an empty set is $1.$ By Agler's hereditary functional calculus, $\frac{1}{\kappa_{_{\mathbb H^d}}}(T, T^*)$ defines a bounded linear operator for any commuting $d$-tuple $T$ of bounded linear operators on a Hilbert space with its 
Taylor spectrum contained in $\mathbb H^d$
(refer to \cite[Section~4.2]{AMY2020}). 
This together with \cite[Theorem~2.1]{CJP2022} leads to the following variant of von Neumann's inequality.
\begin{corollary} \label{von-N} 
Let $T$ be a commuting $d$-tuple of operators in $\mathcal B(\mathcal H).$ If the Taylor spectrum $\sigma(T)$ of $T$ is contained in the $d$-dimensional fat Hartogs triangle $\mathbb H^d$ and $\frac{1}{\kappa_{_{\mathbb H^d}}}(T, T^*)\Ge 0$ then 
$\|f(T)\| \Le\|f\|_{\infty, \mathbb H^d}$ for every $f \in H^{\infty}(\mathbb H^d).$
\end{corollary}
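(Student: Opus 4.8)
The plan is to deduce Corollary~\ref{von-N} directly from \cite[Theorem~2.1]{CJP2022}, treating the present paper's construction as the device that supplies the hypotheses of that result. Schematically, \cite[Theorem~2.1]{CJP2022} converts the positivity hypothesis $\frac{1}{\kappa_{_{\mathbb H^d}}}(T,T^*)\geq 0$ into the bound $\|f(T)\|\leq\|f\|_{\infty,\mathbb H^d}$, provided that $H^2(\mathbb H^d)$ is a \emph{reasonable} Hardy space in the sense of conditions (a) and (b) on \cite[p.~2172]{GGLV2021} and that $\frac{1}{\kappa_{_{\mathbb H^d}}}(T,T^*)$ is a well-defined operator. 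So the proof reduces to checking these two ingredients and then invoking the cited theorem.

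First I would record that $H^2(\mathbb H^d)$ is a functional Hilbert space with reproducing kernel $\kappa_{_{\mathbb H^d}}$; this is precisely the output of Theorem~\ref{main-result}(ii) applied to the realization of $\mathbb H^d$ as a $2$-proper image carried out in the second half of Section~\ref{S3}, so here $H^2(\mathbb H^d)=\mathscr H_\phi(V)$ with $V=\mathbb H^d$. The substantive part of conditions (a) and (b)—density of the polynomials and the contractive containment $H^\infty(\mathbb H^d)\subseteq\mathcal M(H^2(\mathbb H^d))$—I would obtain by transference through the unitary $\varGamma_\phi$ of Theorem~\ref{main-result}(i). The computation $\varGamma_\phi(fg)=J_\phi\cdot(fg)\circ\phi=(f\circ\phi)\,\varGamma_\phi(g)$ shows that $\varGamma_\phi$ intertwines multiplication by $f\in H^\infty(\mathbb H^d)$ on $\mathscr H_\phi(V)$ with multiplication by $f\circ\phi$ on $\mathscr H^\sigma_-(U)$. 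Since $\phi\circ\sigma=\phi$, the symbol $f\circ\phi$ is $\sigma$-invariant, so $\mathscr M_{f\circ\phi}$ leaves $\mathscr H^\sigma_-(U)$ invariant; consequently $\|\mathscr M_f\|=\|\mathscr M_{f\circ\phi}|_{\mathscr H^\sigma_-(U)}\|\leq\|\mathscr M_{f\circ\phi}\|_{\mathscr H(U)}\leq\|f\circ\phi\|_{\infty,U}=\|f\|_{\infty,\mathbb H^d}$, the last inequality being the classical contractive-multiplier property of the base Hardy space. Density of polynomials descends from the base space along $\varGamma_\phi$ in the same manner.

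Second I would verify that $\frac{1}{\kappa_{_{\mathbb H^d}}}(T,T^*)$ is a bounded operator. As noted in the remark preceding the corollary, $\frac{1}{\kappa_{_{\mathbb H^d}}}$ is holomorphic in its first variable and anti-holomorphic in its second, and Agler's hereditary functional calculus \cite[Section~4.2]{AMY2020} then produces a bounded operator as soon as the Taylor spectrum $\sigma(T)$ is contained in $\mathbb H^d$, which is exactly our hypothesis. With $\frac{1}{\kappa_{_{\mathbb H^d}}}(T,T^*)\geq 0$ in hand, \cite[Theorem~2.1]{CJP2022} yields the desired inequality $\|f(T)\|\leq\|f\|_{\infty,\mathbb H^d}$ for every $f\in H^\infty(\mathbb H^d)$.

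The step I expect to be the main obstacle is the contractive containment $H^\infty(\mathbb H^d)\subseteq\mathcal M(H^2(\mathbb H^d))$ with the sharp bound by $\|f\|_{\infty,\mathbb H^d}$: one must confirm not merely that the intertwining by $\varGamma_\phi$ preserves boundedness, but that passing to the $\sigma$-anti-invariant subspace $\mathscr H^\sigma_-(U)$ does not enlarge the multiplier norm and that the base Hardy space genuinely has $H^\infty$ embedding contractively into its multiplier algebra. Once these facts are secured, the remainder of the argument is either routine bookkeeping with $\varGamma_\phi$ or a direct citation.
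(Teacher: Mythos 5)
Your argument is correct in substance and, like the paper, funnels everything through \cite[Theorem~2.1]{CJP2022}; the difference is in how the key hypothesis --- that $H^\infty(\mathbb H^d)$ multiplies $H^2(\mathbb H^d)$ contractively --- gets verified. The paper does this analytically: it first proves Proposition~\ref{Hardy-int}, an integral formula for $\|\cdot\|_\phi$ obtained by pushing the integral formula for $H^2(\triangle^d)$ from \cite[Proposition~7.5(i)]{CJP2023} through \eqref{phi-norm}, and then reads off from it, together with \cite[Corollary~5.22]{PR2016}, that $\mathcal M(H^2(\mathbb H^d)) = H^{\infty}(\mathbb H^d)$ with equality of norms. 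You instead transfer the multiplier property operator-theoretically through the unitary $\varGamma_\phi$ of Theorem~\ref{main-result}(i), via the intertwining $\varGamma_\phi \mathscr M_f = \mathscr M_{f\circ\phi}\varGamma_\phi$, the invariance of $\mathscr H^\sigma_-(U)$ under $\mathscr M_{f\circ\phi}$ (because $f\circ\phi$ is $\sigma$-invariant), and the surjectivity of $\phi$, yielding $\|\mathscr M_f\| \Le \|\mathscr M_{f\circ\phi}\| \Le \|f\circ\phi\|_{\infty,U} = \|f\|_{\infty,\mathbb H^d}$. That chain is sound, and it buys you independence from any new integration theory on $\mathbb H^d$; but be aware it is not input-free: the inequality $\|\mathscr M_{f\circ\phi}\| \Le \|f\circ\phi\|_{\infty,U}$ is precisely the statement that $H^\infty(\triangle^d)$ multiplies $H^2(\triangle^d)$ contractively, which is \emph{not} classical for this kernel-defined space --- it is itself a consequence of the integral formula of \cite{CJP2023} (for $d=2$ see \cite{CJP2022}). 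So both proofs ultimately rest on the same analytic input, applied upstream (you) or downstream (the paper). Also, if \cite[Theorem~2.1]{CJP2022} is invoked with the hypothesis that the multiplier algebra \emph{equals} $H^\infty$ isometrically, you must add the soft converse $\|f\|_{\infty, \mathbb H^d} \Le \|\mathscr M_f\|$; this is exactly \cite[Corollary~5.22]{PR2016} and applies here since $\kappa_{_{\mathbb H^d}}(z,z)>0$ on $\mathbb H^d$.

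One genuine inaccuracy should be removed: your reading of the conditions on \cite[p.~2172]{GGLV2021} as including ``density of the polynomials,'' and the claim that such density ``descends from the base space along $\varGamma_\phi$.'' Polynomial density is not a hypothesis of \cite[Theorem~2.1]{CJP2022} (it could not be: the flagship example of that paper is the Hartogs triangle), and it is in fact \emph{false} here: expanding the kernel of $H^2(\triangle^d)$ shows that the Laurent monomial $1/z_d$ lies in $H^2(\triangle^d)$ and is orthogonal to every polynomial; since $1/z_d$ is $\sigma$-anti-invariant and $\varGamma_\phi(1/z_d)=2/z_d$, the function $1/z_d$ also lies in $H^2(\mathbb H^d)$ and is orthogonal to all polynomials there. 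Fortunately nothing in your argument uses this claim, so deleting it leaves the proof intact.
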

We also apply Theorem~\ref{main-result} to an egg domain to construct a Hardy space over it (see Subsection~\ref{subs4.2}). For concrete instances of Theorem~\ref{main-result} as presented in Section~\ref{S3}, see Table~\ref{Table1} (see Section~\ref{S3} for relevant notations).

\begin{tiny}
\begin{table}[H]
\begin{tabular}{|c|c|c|c|c|c|}
\hline & & & & & \\ $U$ & $V$ &  $\sigma(z)$ 
 & $\phi(z)$ & $\kappa(z, w)$ &  $\kappa_\phi(z, w)$ \\
 & & & & (modulo constant) & (modulo constant)
\\ 
%\hline  
%& & & & & \\
%$\mathbb D^2$ & $\mathbb D^2$ &  $(-z_1, z_2)$ 
% & $(z^2_1, z_2)$ & $\frac{1}{(1-z_1 \overline{w}_1)(1-z_2 \overline{w}_2)}$ & {\color{blue} $\frac{1}{(1-z_1 \overline{w}_1)(1-z_2 \overline{w}_2)}$}
%\\ 
\hline  
& & & & & \\
$\mathbb D^2$ & $\mathbb G_2$ &  $(z_2, z_1)$ 
 & $(z_1+z_2, z_1z_2)$ & $\frac{1}{(1-z_1 \overline{w}_1)(1-z_2 \overline{w}_2)}$ &  $\frac{1}{(1-z_2\overline{w}_2)^2 - (z_1-z_2\overline{w}_1)(\overline{w}_1-z_1\overline{w}_2)}$ 
\\ 
& & & & & \\
\hline & & & & & \\ $\mathfrak R_{II}$ & $\mathbb E$ &  $(z', -z_3)$ 
 & $(z', z_1z_2-z^2_3)$ & 
  $
\det\left(I-Z\overline{W} \right)^{-\frac{3}{2}},$  
&  See \eqref{formula-Tetra} 
\\ 
  &  &  
 &  &$Z=\big[\begin{smallmatrix} z_{1} & z_3 \\ z_3 & z_{2}\end{smallmatrix}\big]$  &  
\\ 
& & & & & \\
\hline 
& & & & & \\
$\triangle^2$ & $\mathbb H^2$ &  $(z_1, -z_2)$ 
 & $(z_1, z^2_2)$ & $\frac{1}{(1-z_2 \overline{w}_2)(z_{2} \overline{w}_{2}- z_{1} \overline{w}_{1})}$ &  $\frac{1+z_{1}\overline{w}_{1}}{(z_2\overline{w}_2-z_{1}^2\overline{w}_{1}^2)(1-z_2\overline{w}_2)}$ 
\\ 
& & & & & \\
\hline
& & & & & \\
$\mathbb B^2$ & $\mathbb E_{2, 1}$ &  $(z_1, -z_2)$ 
 & $(z_1, z^2_2)$ & $\frac{1}{\big(1-\inp{z}{w}_{_{\mathbb C^2}}\big)^2}$ &  $\frac{1-z_1\overline{w}_1}{[(1-z_1\overline{w}_1)^2-z_2\overline{w}_2]^2}$
\\ 
& & & & & \\
\hline
\end{tabular}
\vskip.2cm
\caption{\label{Table1} Some illustrations of Theorem~\ref{main-result} }
\end{table}
\end{tiny}

 \section{Proof of the Main Theorem}

The proof of Theorem~\ref{main-result} involves several lemmas. We begin with the following known fact (the proof is given for the sake of completeness). 
\begin{lemma} \label{RRST}  For a domain $U$ in $\mathbb C^d,$ 
%let $\mathscr H(U)$ be a functional Hilbert space.   
let $g \in \mathrm{Hol}(U)$ and $f : U \rar \mathbb C$ be a nonzero affine-linear function such that the $Z(f) \subseteq Z(g).$  
%Assume that there exists a sequence $\{g_n\}_{n \Ge 1}$ in $\mathrm{Hol}(U)$ such that 
%\beq \label{assump-RRST}
%\mbox{$\{fg_n\}_{n \Ge 1}$ converges to $g$ in $\mathscr H(U).$} 
%\eeq
Then $\frac{g}{f}$ extends holomorphically to $U.$    
\end{lemma}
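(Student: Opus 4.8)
The plan is to reduce to the model case $f(z) = z_1$ and then extend the quotient locally across the hyperplane $Z(f)$ by a holomorphic version of Hadamard's lemma, finally patching the local extensions by the identity theorem. First I would dispose of the trivial possibilities: if $f$ is a nonzero constant, then $Z(f) = \emptyset$ and $g/f$ is already holomorphic on all of $U$. So assume $f$ is nonconstant. Its linear part is then a nonzero $\mathbb C$-linear functional, which can be completed to a system of $d$ independent affine-linear functionals; this yields an invertible affine map $\Phi$ of $\mathbb C^d$ whose first coordinate is $f$. Passing to the coordinates $w = \Phi(z)$, replacing $U$ by $\Phi(U)$ and $g$ by $g \circ \Phi^{-1}$, and using that $\Phi$ is a biholomorphism (so that holomorphic extendability of the quotient is preserved), I may assume outright that $f(z) = z_1$. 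The hypothesis $Z(f) \subseteq Z(g)$ now says precisely that $g$ vanishes on the slice $\{z_1 = 0\} \cap U$.

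For the local extension, fix a point $p = (0, p')$ of $Z(f) \cap U$ and choose a polydisc $\Delta = \Delta_1 \times \Delta'$ centred at $p$ with $\overline{\Delta} \subseteq U$. Since $\Delta_1$ is convex and $g(0, z') = 0$, the holomorphic Hadamard lemma gives, for $(z_1, z') \in \Delta$,
\[
g(z_1, z') = g(0, z') + z_1 \int_0^1 (\partial_{z_1} g)(t z_1, z')\, dt = z_1 \int_0^1 (\partial_{z_1} g)(t z_1, z')\, dt.
\]
Setting $h(z_1, z') := \int_0^1 (\partial_{z_1} g)(t z_1, z')\, dt$, differentiation under the integral sign shows $h \in \mathrm{Hol}(\Delta)$, and $g = z_1 h = f h$ on $\Delta$. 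Hence $g/f$ coincides with the holomorphic function $h$ on $\Delta \setminus Z(f)$, giving a holomorphic extension of $g/f$ across $p$.

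Finally, on $U \setminus Z(f)$ the quotient $g/f$ is holomorphic because $f$ is nonvanishing there. The local extensions constructed above, one near each point of $Z(f) \cap U$, all agree with $g/f$ on the dense open set $U \setminus Z(f)$, hence agree with one another on overlaps by the identity theorem; they therefore glue to a single function in $\mathrm{Hol}(U)$ extending $g/f$. I expect the only genuine subtlety to be the \emph{joint} holomorphicity of the slicewise extension: the one-variable removable singularity theorem extends $g(\cdot, z')/z_1$ for each fixed $z'$, but one must rule out a separate Osgood-type continuity argument. The Hadamard-lemma representation above bypasses this entirely, since its integrand depends holomorphically on all variables simultaneously. (Alternatively, one could represent the extension as the Cauchy transform $\frac{1}{2\pi i}\int_{|\zeta| = r} \frac{g(\zeta, z')}{\zeta(\zeta - z_1)}\, d\zeta$ and differentiate under the integral.)
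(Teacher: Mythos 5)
Your proposal is correct and takes essentially the same route as the paper: an affine-linear change of coordinates reducing to the model case where $f$ is (a translate of) the coordinate function $z_1$, followed by a local factorization of $g$ across the hyperplane $Z(f)$ and gluing of the local extensions. The only difference is the device used for the local division — you factor out $z_1$ via the holomorphic Hadamard lemma $g(z_1,z') = z_1\int_0^1 (\partial_{z_1}g)(tz_1,z')\,dt$, whereas the paper expands $G$ as a power series in $z_1-b$ with coefficients holomorphic in the remaining variables and observes that the constant term vanishes.
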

\begin{proof}
Assume that $f(z)=\sum_{j=1}^d a_j z_j - b$ for some scalars $a_1, \ldots, a_d, b \in \mathbb C.$ 
If all scalars $a_1, \ldots, a_d$ are zero, then $f$ is constant and the required conclusion holds trivially. 
%If some of scalars $a_1, \ldots, a_d$ are zero, then by considering $f$ as a functions of variables corresponding to those nonzero scalars, we may treat the case in which all  scalars $a_1, \ldots, a_d$ are nonzero. 
Since permutation is a biholomorphism, we may assume that for some $1 \Le l \Le d,$ $a_j \neq 0$ for all $j=1, \ldots, l.$ Consider the new variables 
\beqn
\tilde{z}_1 :=\sum_{j=1}^l a_j z_j, \quad \tilde{z}_j := z_j,~j=2, \ldots, d.
\eeqn 
It follows that $\tilde{z}=Az,$ where $\tilde{z}=(\tilde{z}_1, \ldots, \tilde{z}_d)$ and $z=(z_1, \ldots, z_d)$ are considered as column vectors and $A$ is the $d \times d$ invertible matrix given by  
\beqn
A = 
\left[\begin{matrix}
a_1 & \cdots & a_l & 0 & \cdots & 0 \\
0 & 1 & 0 & 0 & \cdots & 0\\
0 & 0 & 1 & 0 & \cdots & 0\\
\vdots & \vdots & \ddots & \ddots & \vdots & 0\\
0 & 0 & 0 & 0 & \cdots & 1   
\end{matrix}
\right].
\eeqn
In the new coordinates $\tilde{z},$ $U$ gets transformed into $\tilde{U}$ and $\frac{g}{f}$ takes the form $\frac{G}{F},$ where $G(z)=g \circ A^{-1}(z)$ and $F(z)=z_1-b$ for $z \in \tilde{U}.$ Since the zero set of $F$ is contained in the zero set of $G,$ by expanding $G$ around any point in $\{z \in \tilde{U} : z_1=b\}$ as a power series in $z_1$ with coefficients holomorphic in $z_2, \ldots, z_d,$ if required, we can see that $\frac{G}{F}$ extends to a holomorphic function. Since the change of variables is biholomorphic, this completes the proof of the lemma.  
%Also, if $F(w)=0,$ then $Az+b=0$ or $f(z)=0$ and hence by assumption, $g(z)=0.$ Thus $g(A^{-1}w)=0,$ that is, $G(w)=0.$   
\end{proof}

%\begin{proof} 
%Since the reproducing kernel is jointly continuous\footnote{Indeed, since $\eta : (z, w) \mapsto \kappa(z, \overline{w})$ is separately holomorphic (by assumption), by Hartogs separate analyticity theorem, $\eta$ is holomorphic.}, the assumption \eqref{assump-RRST} combined with the reproducing kernel property of $\mathscr H(U)$ (see \eqref{rproperty}) implies that
%\beq \label{cgn-new}
%\mbox{$\{f g_n\}_{n \Ge 1}$ converges compactly to $g.$}
%\eeq 
%%In particular, $g \in \mathrm{Hol}(U).$ 
%Let $h:=\frac{g}{f}$ on $U \backslash Z(f).$ In view of the Hartogs separate analyticity theorem (see \cite{S2005}), it suffices to check that $h$ extends as a separately analytic function on $U.$ To see the latter, first assume that $d > 1.$ Write 
%$a=(a_1, a') \in U,$ where $a_1 \in \mathbb C$ and $a' \in \mathbb C^{d-1}.$ If $w \mapsto f(w, a')$ is nowhere vanishing in a neighborhood, say $W$ of $a_1,$ then $w \mapsto h(w, a')$ is holomorphic in $W.$ If, for some nonnegative integer $k,$ $(\partial^j_{1}f)(a)=0,$ $j=0, \ldots, k,$ then by \eqref{cgn-new} and Weierstrass convergence theorem (see \cite[Theorem~1.4.20]{S2005}) $(\partial^j_{1} g)(a)=0,$  $j=0, \ldots, k.$  It now follows from the one-variable Riemann removable singularity theorem that $w \mapsto h(w, a')$ extends holomorphically in a neighborhood of $a_1.$ Arguing as above (in each variable), one may now show that $h$ extends as a separately analytic function. This completes the proof in case $d > 1.$ It is clear from the discussion above that the same proof works in case $d=1.$  
%\end{proof}

We formally introduce the following notion (weaker than $2$-properness) as it appears naturally in the proof of Theorem~\ref{main-result}. 
\begin{definition} \label{Admissible} Let $U, V$ be domains in $\mathbb C^d.$ 
A holomorphic surjection $\phi : U \rar V$ is said to be {\it admissible} if 
\beq \label{fibre}
\mbox{$\phi^{-1}(\{z\})$ is singleton for every $z \in \phi(Z(J_\phi)).$} 
\eeq
\end{definition}
\begin{remark} \label{2-proper admissible}
Any $2$-proper holomorphic map is admissible. This is a consequence of  \cite[Proposition~15.1.5 \& Theorem~15.1.9]{R1980}. There do exist admissible proper holomorphic maps of higher multiplicities. For example, for a positive integer $m \Ge 2,$ consider the map $\phi : \mathbb C^d \rightarrow \mathbb C^d$ given by 
$$\phi(z) = (z^m_1, z_2, \ldots, z_d), \quad z= (z_1, z_2, \ldots, z_d) \in \mathbb C^d.$$ 
%Note that the Jacobian $J_{\phi}(z)=mz_1^{m-1}.$ Thus, $$Z(J_{\phi})=\{(0, z_2, \ldots, z_d) : z_2, \ldots, z_d \in \mathbb D\}=\phi(Z(J_{\phi})).$$ 
Clearly, $\phi$ is an admissible map. By \cite[Theorem~5.1]{R1982}, $\phi$ is $m$-proper.
\end{remark}
\begin{remark} \label{Fix-sigma} Assume that $\sigma : \mathbb C^d \rar \mathbb C^d$ is affine-linear and $\phi : U \rar V$ is a $\sigma$-invariant admissible map. 
For $w \in Z(J_\phi),$ $\phi(\sigma(w)) = \phi(w) \in \phi(Z(J_\phi)),$ and hence by \eqref{fibre}, $\sigma(w)=w.$ 
Thus we have the inclusion
\beqn
Z(J_\phi) \subseteq \mathrm{Fix}(\sigma).
\eeqn
Also, if $J_\sigma = - 1,$ then by Remark~\ref{remark-new}(b), $J_\phi \circ \sigma = - J_\phi,$ 
 and hence $Z(J_\phi) = \mathrm{Fix}(\sigma).$
\end{remark}

\begin{lemma} \label{Shubham} 
Let
$\phi : U \rar V$ be an admissible proper map and let $h \in \mathrm{Hol}(U)$ be given. Define $f : V \rar \mathbb C$ by $f(z)=h(\tilde{z})$ if $\tilde{z} \in \phi^{-1}(\{z\}).$ If $f$ is well-defined, then $f$ is holomorphic. 
\end{lemma}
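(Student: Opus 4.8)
The plan is to prove holomorphy of $f$ away from the critical image and then to remove the critical image by a singularity-removal argument. Put $A := \phi(Z(J_\phi))$ and $V_0 := V \setminus A$. Since $\phi$ is proper it is a closed map and $Z(J_\phi)$ is closed (being the zero set of the holomorphic function $J_\phi$), so $A$ is closed and $V_0$ is open. First I would verify that $f$ is holomorphic on $V_0$: for $z \in V_0$ and any $\tilde z \in \phi^{-1}(\{z\})$ one has $J_\phi(\tilde z) \neq 0$ (otherwise $z = \phi(\tilde z) \in A$), so by the holomorphic inverse function theorem $\phi$ admits a local holomorphic inverse $\psi$ near $z$ with $\psi(z) = \tilde z$. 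On the domain of $\psi$ the very definition of $f$ gives $f = h \circ \psi$, and the standing well-definedness hypothesis makes this local description independent of the chosen preimage branch. Hence $f$ agrees locally with a composition of holomorphic maps, so $f \in \mathrm{Hol}(V_0)$.

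Next I would show that $f$ is continuous on $V$; the only issue is at a point $z_0 \in A$, where admissibility (see \eqref{fibre}) forces $\phi^{-1}(\{z_0\}) = \{\tilde z_0\}$. Given a sequence $z_n \to z_0$ in $V$, choose $\tilde z_n \in \phi^{-1}(\{z_n\})$. The set $\{z_n : n \in \mathbb N\} \cup \{z_0\}$ is compact, so by properness its preimage is compact and contains every $\tilde z_n$; thus some subsequence $\tilde z_{n_k} \to \tilde z_\ast$, and continuity of $\phi$ yields $\phi(\tilde z_\ast) = z_0$, i.e. $\tilde z_\ast = \tilde z_0$. Then $f(z_{n_k}) = h(\tilde z_{n_k}) \to h(\tilde z_0) = f(z_0)$, and the usual subsequence argument upgrades this to $f(z_n) \to f(z_0)$. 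Therefore $f$ is continuous on $V$; in particular it is locally bounded there (this also follows directly from properness, since $|f| \le \sup_{\phi^{-1}(K)}|h|$ on any compact $K \subseteq V$).

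To finish I would remove $A$ as a singularity. By Remmert's proper mapping theorem the image $A = \phi(Z(J_\phi))$ of the analytic set $Z(J_\phi)$ under the proper holomorphic map $\phi$ is an analytic subset of $V$, and since $J_\phi \not\equiv 0$ by Remark~\ref{remark-new}(a), it is a proper, hence thin, analytic subset, so $V_0$ is dense in $V$. As $f$ is holomorphic on $V \setminus A$ and locally bounded near $A$, the several-variable Riemann extension theorem produces a function in $\mathrm{Hol}(V)$ agreeing with $f$ on the dense set $V_0$; since $f$ is continuous on $V$, this holomorphic extension must coincide with $f$ at every point of $A$ as well. Hence $f \in \mathrm{Hol}(V)$.

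I expect the principal subtlety to be the continuity step, as it is precisely there that properness (to confine the preimages $\tilde z_n$ to a compact set and extract a convergent subsequence) must be combined with admissibility (to identify the limit as the unique preimage $\tilde z_0$); the well-definedness hypothesis quietly does the work of suppressing branch ambiguities in the first step. Once continuity, hence local boundedness, is secured and $A$ is recognised as a thin analytic set via Remmert's theorem, the removal of the singularity is entirely standard.
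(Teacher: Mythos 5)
Your proposal is correct and follows essentially the same route as the paper's proof: holomorphy of $f$ off $\phi(Z(J_\phi))$, continuity at points of $\phi(Z(J_\phi))$ via the properness--admissibility compactness/subsequence argument, and then removal of the thin analytic set $\phi(Z(J_\phi))$ by a Riemann-type extension theorem. The only cosmetic differences are that you establish the first step with the inverse function theorem rather than citing Rudin, and you use the locally-bounded form of the Riemann extension theorem together with continuity to identify the extension with $f$, where the paper invokes the continuous version of the removable singularity theorem directly.
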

\begin{proof} Assume that $f$ is well-defined. 
Since $\phi$ is a proper holomorphic map, by \cite[Remark~15.1.10]{R1980}, $f$ is holomorphic on $V \backslash \phi(Z(J_\phi)).$ 
Note that the proper image of an analytic set is analytic (see \cite[Theorem~4.1.9(c)]{R1980}) and  $\phi(Z(J_\phi))$ is a proper subset of $V$ (since $\phi$ is closed), by \cite[Proposition~4.1.6(3)]{S2005}, the codimension of $\phi(Z(J_\phi))$ is at least $1.$ Thus,  
to see that $f$ is holomorphic on $V,$ by RRST (see \cite[Theorem~4.2.1]{S2005}), it suffices to show that $f$ is continuous on $V$. Clearly, $f$ is continuous on $V \backslash \phi(Z(J_\phi)).$ Now for $z \in \phi(Z(J_\phi)),$ let $\{z_n\}_{n \Ge 1}$ be a sequence in $V$ such that $\{z_n\}_{n \Ge 1}$ converges to $z.$ By \eqref{fibre}, there exists a unique $\tilde z$ such that $\phi(\tilde z)=z.$ For each integer $n \Ge 1,$ let $\tilde {z}_n \in \phi^{-1}(\{z_n\}).$ \vskip.1cm

\noindent
{\bf Claim.} $\{\tilde {z}_n\}_{n \Ge 1}$ converges to $\tilde z.$ \vskip.1cm

%This is equivalent to showing that every subsequence of $\{\tilde {z}_n\}_{n \Ge 1}$  has a further subsequence which converges to $\tilde z.$ 
\noindent
Since $\phi$ is proper, $\phi^{-1}(\{z\}\cup \{z_n: n\Ge 1\})$ is compact. Thus every subsequence of $\{\tilde {z}_n\}_{n \Ge 1}$  has a convergent subsequence. 
%Now we will show that every convergent subsequence of $\{\tilde {z}_n\}_{n \Ge 1}$ converges to $\tilde z.$ 
Let $\{\tilde {z}_{n_k}\}_{k \Ge 1}$ be subsequence of $\{\tilde {z}_n\}_{n \Ge 1}$ converging to some $w \in U.$ By the continuity of  $\phi,$ $\phi(\tilde {z}_{n_k})$ converges to $\phi(w).$ As $\phi(\tilde {z}_{n_k})=z_{n_k}$ for every $k \Ge 1,$ we obtain $\phi(w)=z,$ which implies that $w=\tilde z$ (since, by \eqref{fibre}, $z$ has only one pre-image $\tilde z$). Since any bounded sequence, which is not convergent, has at least two limit points (by Bolzano-Weierstrass theorem), the claim stands verified.
 Since $h$ is a continuous map, by the claim above, $\{f(z_n)\}_{n \Ge 1}$ converges to $f(z).$ 
 %This completes the proof. 
\end{proof}
\begin{remark}
Here is an alternative and short proof of the continuity of $f$ as suggested by the anonymous referee. To see that, the well-definedness of $f$ implies that $h = f \circ \phi.$ Since $h$ is continuous and $\phi$ is a closed continuous map, $f$ is automatically continuous.
\end{remark}

%The completeness of $\mathscr H_\phi(V)$ is a simple consequence of Remark~\ref{2-proper admissible} and the following general fact 
For a special case of the following fact, see \cite[p.~2367]{MRZ2013}. 
\begin{lemma}  \label{fibre-lemma}
Assume that 
$\phi : U \rar V$ is an admissible proper map. Assume further that $J_\phi$ is affine-linear.
Then 
$\mathscr H_\phi(V)$ endowed with the norm $\|\cdot\|_\phi$ is a Hilbert space $($see \eqref{H(V)} and \eqref{phi-norm}$).$  
\end{lemma}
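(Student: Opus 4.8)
The statement to prove is Lemma~\ref{fibre-lemma}: under the hypotheses that $\phi : U \rar V$ is an admissible proper map with $J_\phi$ affine-linear, the space $\mathscr H_\phi(V)$ equipped with $\|\cdot\|_\phi$ is a Hilbert space. The natural strategy is to transport the Hilbert-space structure from $\mathscr H(U)$ across the map $\varGamma_\phi(f) = J_\phi \cdot f \circ \phi$. Since $\|f\|_\phi$ is by definition $\|\varGamma_\phi(f)\|_{\mathscr H(U)}$, the map $\varGamma_\phi$ is by construction an isometry of $\mathscr H_\phi(V)$ into $\mathscr H(U)$; thus the only genuine content is (i) that $\|\cdot\|_\phi$ really is a norm (not merely a seminorm) and (ii) \emph{completeness} of $\mathscr H_\phi(V)$. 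The norm issue is already dealt with in Remark~\ref{remark-new}(c) via the open mapping theorem—so I would cite that—and the real work lies in completeness.

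\textbf{First steps.} I would first make precise that $\varGamma_\phi$ is a linear isometric embedding of $\mathscr H_\phi(V)$ into $\mathscr H(U)$: it is linear because $J_\phi$ is a fixed multiplier and $f \mapsto f\circ\phi$ is linear, and it is isometric by the very definition \eqref{phi-norm}. Completeness of $\mathscr H_\phi(V)$ is then equivalent to the \emph{closedness} of the range $\varGamma_\phi(\mathscr H_\phi(V))$ inside the (already complete) space $\mathscr H(U)$. So I would take a Cauchy sequence $\{f_n\}$ in $\mathscr H_\phi(V)$, observe that $g_n := \varGamma_\phi(f_n)=J_\phi\cdot f_n\circ\phi$ is Cauchy in $\mathscr H(U)$, and let $g$ be its limit. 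The task is then to exhibit an $f \in \mathscr H_\phi(V)$ with $\varGamma_\phi(f) = g$, i.e.\ to show $g$ lies in the range.

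\textbf{Recovering the limit function.} The key point is that because $\phi$ is $\sigma$-invariant-free here (the lemma does not assume $\sigma$) but \emph{admissible} and proper, the limit $g$ should again be divisible by $J_\phi$ and descend through $\phi$. Concretely, convergence in an RKHS forces pointwise convergence, so $g_n(\tilde z) \to g(\tilde z)$ for each $\tilde z \in U$. On the open dense set $U \setminus Z(J_\phi)$ I can divide by $J_\phi$ and set $f(\phi(\tilde z)) := g(\tilde z)/J_\phi(\tilde z)$; the pointwise limits $f_n(\phi(\tilde z)) = g_n(\tilde z)/J_\phi(\tilde z) \to g(\tilde z)/J_\phi(\tilde z)$ show this is the honest pointwise limit of the $f_n$ and, crucially, that the value depends only on $\phi(\tilde z)$, so $f$ is well-defined as a function on $\phi(U\setminus Z(J_\phi)) = V \setminus \phi(Z(J_\phi))$. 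By Lemma~\ref{Shubham}, $f$ extends holomorphically across $\phi(Z(J_\phi))$ to all of $V$. Finally, $J_\phi \cdot f\circ\phi$ and $g$ agree on the dense set $U\setminus Z(J_\phi)$, hence everywhere by continuity, giving $g = J_\phi\cdot f\circ\phi \in \mathscr H(U)$; thus $f \in \mathscr H_\phi(V)$ with $\varGamma_\phi(f) = g$, and $\|f_n - f\|_\phi = \|g_n - g\|_{\mathscr H(U)} \to 0$.

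\textbf{Main obstacle.} The delicate step is the passage across the zero set $Z(J_\phi)$: a priori $f$ is only defined off $\phi(Z(J_\phi))$, and I must verify that the candidate $f$ is genuinely holomorphic on all of $V$ rather than merely bounded or continuous there. This is exactly where Lemma~\ref{Shubham} (continuity of a function pulled back through an admissible proper map, plus RRST) and the divisibility statement of Lemma~\ref{RRST} (that $g/J_\phi$ with $J_\phi$ affine-linear extends holomorphically whenever $Z(J_\phi)\subseteq Z(g)$) do the essential lifting; the hypothesis that $J_\phi$ is affine-linear is what licenses Lemma~\ref{RRST}. I expect the bookkeeping around well-definedness—confirming that $g/J_\phi$ takes equal values on the two-point (or singleton, over $\phi(Z(J_\phi))$) fibres of $\phi$—to be the subtle part, and I would handle it by noting that each $f_n$ already descends through $\phi$, so the relation $g_n(\tilde z)/J_\phi(\tilde z) = g_n(\tilde z')/J_\phi(\tilde z')$ for $\phi(\tilde z)=\phi(\tilde z')$ passes to the limit.
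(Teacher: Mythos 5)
Your proposal is correct and takes essentially the same route as the paper's own proof: transport completeness through the isometry $\varGamma_\phi$, use Lemma~\ref{RRST} (licensed by the affine-linearity of $J_\phi$) to divide the limit $g$ by $J_\phi$, verify well-definedness of the descended function via pointwise limits off $\phi(Z(J_\phi))$ and singleton fibres (admissibility) on it, and invoke Lemma~\ref{Shubham} for holomorphy. The only difference is cosmetic ordering—you descend on $V \setminus \phi(Z(J_\phi))$ first and then extend across the critical image, whereas the paper first extends $h = g/J_\phi$ holomorphically on all of $U$ and then descends—but the ingredients and logic coincide.
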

\begin{proof} 
By Remark~\ref{remark-new}(c), $\|\cdot\|_\phi$ defines a norm. To see that $\mathscr H_\phi(V)$ is a Hilbert space, let $\{f_n\}_{n \Ge 1}$ be a Cauchy sequence in $\mathscr H_\phi(V).$ It follows that $\{J_\phi f_n \circ \phi\}_{n \Ge 1}$ is a Cauchy sequence in $\mathscr H(U).$ Since $\mathscr H(U)$ is a Hilbert space, there exists $g \in \mathscr H(U)$ such that 
$\{J_\phi f_n \circ \phi\}_{n \Ge 1}$ converges to $g$ in $\mathscr H(U).$ 
Since $J_\phi$ is nonzero (see Remark~\ref{remark-new}(a)) and affine-linear, by Lemma~\ref{RRST}, $h:=\frac{g}{J_\phi}$ extends as a holomorphic function on $U.$

To complete the proof, it now suffices to show that $h=f \circ \phi$ for some $f \in \mbox{Hol}(V).$ To see this, define $f : V \rar \mathbb C$ by $f(z)=h(\tilde{z})$ if $\tilde{z} \in \phi^{-1}(\{z\}).$ We must check that $f$ is well-defined. For this, note the following:
\begin{enumerate}
\item if $z \notin \phi(Z(J_\phi)),$ then 
\beqn
h(\tilde z) = \frac{g(\tilde z)}{J_\phi(\tilde z)} = \lim_{n \rar \infty} f_n\circ \phi(\tilde{z}) =\lim_{n \rar \infty} f_n(z), 
\eeqn
which is independent of the preimage $\tilde{z},$ 
\item if $z \in \phi(Z(J_\phi)),$ then by the assumption \eqref{fibre}, $\phi^{-1}(\{z\})$ is singleton. 
\end{enumerate}
Thus, in both the cases, $f$ is well-defined. This combined with Lemma~\ref{Shubham} shows that $h=f \circ \phi$ with $f \in \mbox{Hol}(V),$ which completes the proof. 
\end{proof}

%\section{A variant of Waring–Lagrange theorem for holomorphic functions}
The proof of Theorem~\ref{main-result} relies on the following variant of the Waring-Lagrange theorem (cf. \cite[Theorem~7.1]{AMY2020}). 
\begin{theorem} \label{Waring-Lagrange}
Let $\sigma : \mathbb C^d \rar \mathbb C^d$ be an affine-linear involution such that $J_\sigma = - 1$ and 
%If there exists a $\sigma$-invariant $2$-proper holomorphic map $\phi : U \rightarrow V$ such that $J_\phi \circ \sigma = - J_\phi,$ 
let $\phi : U \rar V$ be a $\sigma$-invariant $2$-proper holomorphic map.
Then $\{g \in \mathrm{Hol}(U): g \circ \sigma = g \} = \{ f \circ \phi: f \in \mathrm{Hol}(V)\}.$
\end{theorem}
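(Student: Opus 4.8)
The inclusion $\{f \circ \phi : f \in \mathrm{Hol}(V)\} \subseteq \{g \in \mathrm{Hol}(U) : g \circ \sigma = g\}$ is immediate, so the plan is to devote the work to the reverse inclusion. Indeed, if $g = f \circ \phi$ for some $f \in \mathrm{Hol}(V)$, then the $\sigma$-invariance $\phi \circ \sigma = \phi$ gives $g \circ \sigma = f \circ \phi \circ \sigma = f \circ \phi = g$, so every $g$ of this form is $\sigma$-invariant.

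For the nontrivial inclusion, I would fix $g \in \mathrm{Hol}(U)$ with $g \circ \sigma = g$ and define $f : V \rar \mathbb C$ by $f(z) = g(\tilde z)$, where $\tilde z \in \phi^{-1}(\{z\})$ --- precisely the recipe of Lemma~\ref{Shubham} with $h = g$. The whole argument then reduces to checking that $f$ is \emph{well-defined}: once that is in hand, Lemma~\ref{Shubham} applies (any $2$-proper map is admissible by Remark~\ref{2-proper admissible}) and yields that $f$ is holomorphic on $V$, while the identity $g = f \circ \phi$ follows at once from the defining recipe, since $w \in \phi^{-1}(\{\phi(w)\})$ forces $f(\phi(w)) = g(w)$ for every $w \in U$.

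Well-definedness I would split according to the fiber structure. For $z \in \phi(Z(J_\phi))$ the fiber $\phi^{-1}(\{z\})$ is a singleton by admissibility (see \eqref{fibre}), so there is nothing to verify. The decisive case is $z \in V \setminus \phi(Z(J_\phi))$, where $2$-properness produces exactly two distinct preimages $w_1 \neq w_2$. Here the key step is to identify this fiber with a $\sigma$-orbit: since $\phi \circ \sigma = \phi$, the point $\sigma(w_1)$ again lies in $\phi^{-1}(\{z\}) = \{w_1, w_2\}$, and the alternative $\sigma(w_1) = w_1$ is excluded because it would place $w_1 \in \mathrm{Fix}(\sigma) = Z(J_\phi)$ (by Remark~\ref{Fix-sigma}), contradicting $z \notin \phi(Z(J_\phi))$. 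Hence $\sigma(w_1) = w_2$, and the $\sigma$-invariance of $g$ gives $g(w_2) = g(\sigma(w_1)) = g(w_1)$, so the value $f(z)$ is independent of the chosen preimage.

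The main obstacle is exactly this identification of a generic fiber with a genuine two-point orbit $\{w, \sigma(w)\}$; the rest is bookkeeping that rides on the two earlier lemmas. It is worth flagging that the hypothesis $J_\sigma = -1$ enters only at this point, through the equality $Z(J_\phi) = \mathrm{Fix}(\sigma)$ from Remark~\ref{Fix-sigma}, which is what guarantees that a fiber over a point outside $\phi(Z(J_\phi))$ contains no fixed point of $\sigma$ and therefore splits cleanly under $\sigma$.
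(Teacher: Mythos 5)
Your proposal is correct and follows essentially the same route as the paper's proof: both define $f(z)=g(\tilde z)$ on fibers, verify well-definedness by treating the singleton fibers over $\phi(Z(J_\phi))$ and the two-point fibers $\{\tilde z,\sigma(\tilde z)\}$ elsewhere (using Remark~\ref{Fix-sigma} with $J_\sigma=-1$ to rule out fixed points in a generic fiber), and then invoke Lemma~\ref{Shubham} for holomorphy. No gaps; your explicit remark that $g=f\circ\phi$ follows from $w\in\phi^{-1}(\{\phi(w)\})$ is a small clarification the paper leaves implicit.
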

\begin{proof}
%[Proof of Theorem~\ref{Waring-Lagrange}] 
%Let $f \in \mathrm{Hol}(V).$ 
Since $\phi$ is $\sigma$-invariant, we have $$\{ f \circ \phi: f \in \mathrm{Hol}(V)\} \subseteq \{g\in \mathrm{Hol}(U): g \circ \sigma = g \}.$$ To see the reverse inclusion, let $g \in \mathrm{Hol}(U)$ such that $g \circ \sigma = g.$ Define a function $f : V \rightarrow \mathbb C$ by setting $f(z)= g(\tilde z),$ where $z \in V$ and $\tilde z \in \phi^{-1}(\{z\}).$ \vskip.1cm

\noindent
{\bf Claim.} $f$ is well-defined. \vskip.1cm

\noindent
By Remark~\ref{2-proper admissible}, $\phi$ is admissible (see Definition~\ref{Admissible}). Consequently, by \eqref{fibre}, $f$ is well-defined on $\phi(Z(J_\phi)).$ If $\sigma(\tilde z) = \tilde z,$ then by Remark~\ref{Fix-sigma} and the assumption that $J_\sigma = - 1,$ $\tilde z \in Z(J_\phi),$ that is, $J_\phi(\tilde z)=0$ or $z \in \phi(Z(J_\phi))$. To complete the verification of the claim, let $z \notin \phi(Z(J_\phi)),$ and note that $\sigma(\tilde z) \neq \tilde z.$ Since $\phi$ is $2$-proper,  $\phi^{-1}(\{z\})=\{ \tilde z, \sigma (\tilde z)\}.$ Now the $\sigma$-invariance of $g$ yields the well-definedness of $f$ on $V \backslash \phi(Z(J_\phi)).$ 
One may now apply Lemma~\ref{Shubham} to complete the proof.  
\end{proof} 
\begin{proof}[Proof of Theorem~\ref{main-result}] 
 The completeness of $\mathscr H_\phi(V)$ is a simple consequence of Remark~\ref{2-proper admissible} and Lemma~\ref{fibre-lemma}. 
 
(i) To see this, let $g \in \mathscr H^{\sigma}_{-}(U).$  
Since $\varGamma_\phi$ maps $\mathscr H_\phi(V)$ into $\mathscr H^\sigma_{-}(U)$ (see Remark~\ref{remark-new}(d)), it suffices to check that there exists $f \in \mathscr H_\phi(V)$ such that $\varGamma_\phi(f)=g.$ We claim that $\frac{g}{J_\phi}$ extends as a holomorphic function to $U.$
Since $J_\sigma =-1,$ by Remark~\ref{Fix-sigma}, $Z(J_\phi)=\mathrm{Fix}(\sigma).$ On the other hand, $g \circ \sigma = -g.$ Thus, for any $w \in Z(J_\phi),$ $g(w)= g \circ \sigma(w) = -g(w).$ This yields the inclusion 
\beq \label{inclusion-new}
Z(J_\phi) \subseteq Z(g).
\eeq
%However, by assumption, $\phi_1, \ldots, \phi_{d-1}$ are homogeneous polynomials of degree $1$ and $\phi_d$ is a homogeneous polynomial of degree $2$ such that $\phi^{-1}(\{0\})=\{0\}.$ A simple calculation shows that $J_\phi$ is a polynomial in $z$ of degree at most $1.$ 
By assumption, $J_\phi$ is affine-linear. 
This combined with \eqref{inclusion-new} and Lemma~\ref{RRST} shows that $\frac{g}{J_\phi}$ extends as a holomorphic function, say $h \in \mathrm{Hol}(U).$
As $h$ and $\frac{g}{J_\phi}$ are equal and $\sigma$-invariant outside the nowhere dense set $Z(J_\phi),$ $h$ is also $\sigma$-invariant.  This combined with the assumptions that $\phi$ is a $2$-proper map and $J_\sigma = -1$ shows that there exists $f \in \mathrm{Hol}(V)$ such that $ h= f \circ \phi$ (see Theorem~\ref{Waring-Lagrange}). It follows that $\varGamma_\phi(f)=g.$

(ii) We now show that $\mathscr H_\phi(V)$ is a reproducing kernel Hilbert space (cf. \cite[Lemma 3.3]{GR2024})). 
To see that, consider the complex vector space $H = \{f \in \mbox{Hol}(U) : f = -f \circ \sigma\}.$ Since $H$ is closed in $\mbox{Hol}(U)$ (for the topology of compact convergence), it is a Fr\'{e}chet space. 
By Lemma~\ref{RRST} and Theorem~\ref{Waring-Lagrange}, there is a linear mapping $T : H \rar \mbox{Hol}(V)$ such that $f = J_\phi((Tf) \circ \phi) = \Gamma_\phi(Tf)$ for every $f \in H$ (indeed, $T$ is the set-theoretic inverse of $\Gamma_\phi : \mbox{Hol}(V) \rar H$). 
By the bounded inverse theorem, $T$ is continuous. Since the canonical mapping $\mathscr H_\phi(V) \hookrightarrow \mbox{Hol}(V)$ is composition of the isometry $\Gamma_\phi :  \mathscr H_\phi(V) \rar \mathscr H^\sigma_-(U),$ the canonical (continuous) mapping $\mathscr H^\sigma_-(U) \hookrightarrow H,$ and $T,$ it is continuous. Hence, $\mathscr H_\phi(V)$ is a reproducing kernel Hilbert space.

By (i), note that for any $w \in U,$ there exists $f_w \in \mathscr H_\phi(V)$ such that $\kappa^{\sigma}_{-}(\cdot, w)=J_\phi \cdot f_w \circ \phi.$ 
 Thus, for each $w \in U,$ $\frac{\kappa^{\sigma}_{-}(\cdot, w)}{J_\phi}$ extends to the holomorphic function $f_w \circ \phi$ on $U.$ 
Further, note that for each $z \in U,$ the function $F_z : w \mapsto \overline{f_w \circ \phi(z)}$ belongs to $\mathscr H^\sigma_-(U).$ 
Define
 \beqn
\kappa_\phi(\phi(z), \phi(w)) = 
 \inp{F_w}{F_z}_{\mathscr H^\sigma_-(U)}, \quad z, w \in U. \eeqn
It is easy to see using the reproducing property of $\mathscr H^\sigma_-(U)$ (see \eqref{rproperty}) that 
\beqn
J_\phi(z) \kappa_{\phi}(\phi(z), \phi(w))\overline{J_\phi(w)} &=& \kappa^{\sigma}_{-}(z, w) \\
&\overset{\eqref{kappa-s-minus}}= &
\frac{1}{2}\Big(\kappa(z, w) - \kappa(\sigma(z), w)\Big), \, z, w \in U
\eeqn
(if $w \in Z(J_\phi)$, then by Remark~\ref{Fix-sigma}, $w \in \mathrm{Fix}(\sigma),$ and hence by \eqref{sigma-invariant}, $\kappa(z, w)=\kappa(z, \sigma(w))=\kappa(\sigma(z), w)$, $z, w \in U$).

To complete the proof, it suffices to show that $\kappa_\phi$ is the reproducing kernel for $\mathscr H_\phi(V).$ 
To see that, 
let $f \in \mathscr H_\phi(V)$ and $w \in U.$  We first assume that $J_\phi(w) \neq 0$. Since $\varGamma_\phi$ is unitary from $\mathscr H_\phi(V)$ onto $\mathscr H^{\sigma}_{-}(U)$ (by (i)), 
\beq \label{rkp-dense} \notag
\inp{f}{\kappa_\phi(\cdot, \phi(w))}_{\mathscr H_\phi(V)}  &=& \inp{\varGamma_\phi \,f}{\varGamma_\phi \,\kappa_\phi(\cdot, \phi(w))}_{\mathscr H^\sigma_-(U)}\\ \notag &=& \inp{J_\phi \cdot f\circ \phi}{J_\phi \cdot \kappa_\phi(\phi(\cdot), \phi(w))}_{\mathscr H^\sigma_-(U)} \\ \notag
&\overset{\eqref{formula-ii}}=& \frac{1}{J_\phi(w)} \inp{J_\phi \cdot f\circ \phi}{\kappa^{\sigma}_{-}(\cdot, w)}_{\mathscr H^\sigma_-(U)} \\
&=& f(\phi(w)).
\eeq
This shows that $\mathscr H_\phi(V)$ has reproducing kernel property at all points in $V \backslash \phi(Z(J_\phi)).$ Since $\mathscr H_\phi(V)$ is a reproducing kernel Hilbert space, a density argument using \eqref{rkp-dense} now completes the proof. 
\end{proof}
\begin{remark}
Note that Theorem~\ref{main-result} is applicable to $\phi=(\phi_1, \ldots, \phi_d)$ provided $\phi_1, \ldots, \phi_{d-1}$ are homogeneous polynomials of degree $1$ and $\phi_d$ is a homogeneous polynomial of degree $2$ such that $\phi^{-1}(\{0\})=\{0\}$. Indeed, by \cite[Theorem~5.1]{R1982}, such a map $\phi$ is $2$-proper.
\end{remark}

%We conclude this section with a connection between Theorem~\ref{main-result} and boundary behavior of $2$-proper maps. 
Let $\sigma, U, V, \phi$ be as in the statement of Theorem~\ref{main-result}. Assume that 
$\phi_1, \ldots, \phi_d$ are multipliers of $\mathscr H(U).$ Let $f\in \mathscr H_\phi(V).$ Then $J_\phi \cdot f\circ \phi \in \mathscr H(U),$ and hence by our assumption $\phi_i \cdot J_\phi \cdot f\circ \phi \in \mathscr H(U),$ $i=1, \ldots, d.$  It follows that 
\beqn
%\|z_i f\|_{\phi}=
\|\phi_i \cdot J_\phi \cdot f\circ \phi \|_{\mathscr H(U)}&=&\|J_\phi \cdot \phi_i \cdot f\circ \phi\|_{\mathscr H(U)}\\
&=&\|J_\phi \cdot (z_if)\circ \phi\|_{\mathscr H(U)}, \quad i=1, \ldots, d.
%=\| \phi_i \varGamma_\phi f\|_{\mathcal H(U)}
\eeqn
Thus, for $i=1,2,\ldots,d,$ $z_i f \in \mathscr H_\phi(V)$ and for each $z\in U,$
\beqn
(\varGamma_\phi \mathscr M_{z_i}f)(z)=J_\phi(z) \phi_i(z) f(\phi(z))
=(\phi_i \cdot \varGamma_\phi f)(z)=(\mathscr M_{\phi_i} \varGamma_\phi f)(z),
\eeqn
where $\mathscr M_\psi$ denotes the operator of multiplication by $\psi.$  
This combined with Theorem~\ref{main-result} yields the following: 
\begin{corollary} Let $\sigma, U, V, \phi$ be as in the statement of Theorem~\ref{main-result}. Assume that $U, V$ are bounded.
%Under the assumptions of Theorem~\ref{main-result}, 
If  $\phi_1, \ldots, \phi_d$ are multipliers of $\mathscr H(U),$ then 
$\phi_1, \ldots, \phi_d$ are multipliers of $\mathscr H^\sigma_{-}(U).$ In this case,  
$z_1, \ldots, z_d$ are multipliers of $\mathscr H_\phi(V)$ and 
the commuting $d$-tuple $\mathscr M_\phi = (\mathscr M_{\phi_1}, \ldots, \mathscr M_{\phi_d})$ on 
$\mathscr H^\sigma_{-}(U)$ is unitarily equivalent to the commuting $d$-tuple $\mathscr M_z=(\mathscr M_{z_1}, \ldots, \mathscr M_{z_d})$ on 
$\mathscr H_\phi(V).$ 
\end{corollary}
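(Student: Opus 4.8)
The plan is to verify the three assertions of the corollary in sequence, the crucial structural input being that each component $\phi_i$ of $\phi$ is itself $\sigma$-invariant (since $\phi \circ \sigma = \phi$ forces $\phi_i \circ \sigma = \phi_i$), together with the unitary $\varGamma_\phi : \mathscr H_\phi(V) \to \mathscr H^\sigma_-(U)$ already produced by Theorem~\ref{main-result}(i).

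First I would show that each $\phi_i$ is a multiplier of $\mathscr H^\sigma_-(U)$. Fix $g \in \mathscr H^\sigma_-(U)$. Because $\phi_i \circ \sigma = \phi_i$ and $g \circ \sigma = -g$, one computes $(\phi_i \cdot g) \circ \sigma = (\phi_i \circ \sigma)\,(g \circ \sigma) = -\,\phi_i \cdot g$, so the product lies pointwise in the $(-1)$-eigenspace of composition with $\sigma$. Since $\phi_i$ is a multiplier of the ambient space $\mathscr H(U)$ and $g \in \mathscr H^\sigma_-(U) \subseteq \mathscr H(U)$, we also have $\phi_i \cdot g \in \mathscr H(U)$. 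The two facts together give $\phi_i \cdot g \in \mathscr H^\sigma_-(U)$, which is exactly the multiplier property. Here the boundedness of $U, V$ is what keeps the statement non-vacuous, as it forces $\phi_i$ to be a bounded function, a prerequisite for being an RKHS-multiplier.

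Next I would establish that $z_1, \ldots, z_d$ are multipliers of $\mathscr H_\phi(V)$. For $f \in \mathscr H_\phi(V)$ the identity $(z_i f) \circ \phi = \phi_i \cdot (f \circ \phi)$ yields
\[
J_\phi \cdot (z_i f) \circ \phi = \phi_i \cdot \big(J_\phi \cdot f \circ \phi\big).
\]
Since $J_\phi \cdot f \circ \phi \in \mathscr H(U)$ by the definition of $\mathscr H_\phi(V)$ in \eqref{H(V)}, and $\phi_i \in \mathcal M(\mathscr H(U))$, the right-hand side belongs to $\mathscr H(U)$; hence $z_i f \in \mathscr H_\phi(V)$.

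Finally, for the unitary equivalence I would simply verify that $\varGamma_\phi$ intertwines the two tuples componentwise. For $f \in \mathscr H_\phi(V)$ and $z \in U$,
\[
(\varGamma_\phi \mathscr M_{z_i} f)(z) = J_\phi(z)\,\phi_i(z)\,f(\phi(z)) = \phi_i(z)\,(\varGamma_\phi f)(z) = (\mathscr M_{\phi_i}\varGamma_\phi f)(z),
\]
so that $\varGamma_\phi \mathscr M_{z_i} = \mathscr M_{\phi_i} \varGamma_\phi$ for each $i$. As multiplication operators commute among themselves, both $\mathscr M_z$ and $\mathscr M_\phi$ are commuting $d$-tuples, and the single unitary $\varGamma_\phi$ implements the asserted unitary equivalence. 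I do not expect any serious obstacle here: the argument is essentially bookkeeping once $\phi_i \circ \sigma = \phi_i$ is noticed. The only point demanding a word of care is that these multiplications define \emph{bounded} operators, so that speaking of a ``$d$-tuple of bounded operators'' is legitimate; this follows from the closed graph theorem as recorded in the preliminaries.
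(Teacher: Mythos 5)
Your proposal is correct and follows essentially the same route as the paper: the key identity $J_\phi \cdot (z_i f)\circ \phi = \phi_i \cdot (J_\phi \cdot f \circ \phi)$ and the intertwining relation $\varGamma_\phi \mathscr M_{z_i} = \mathscr M_{\phi_i}\varGamma_\phi$ are exactly the computations the paper makes before stating the corollary. The only (harmless) variation is that you establish the multiplier property of $\phi_i$ on $\mathscr H^\sigma_{-}(U)$ directly from the parity relation $(\phi_i \cdot g)\circ \sigma = -\,\phi_i \cdot g$, whereas the paper obtains it by combining the intertwining with the unitarity of $\varGamma_\phi$ from Theorem~\ref{main-result}(i).
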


\section{Illustrations} \label{S3}
In this section, we first apply Theorem~\ref{main-result} to obtain Hardy spaces of the symmetrized bidisc and tetrablock. These constructions recover \cite[Theorem 3.1]{MRZ2013} (in two variables) and \cite[Example 3.11]{GR2024}. We also apply Theorem~\ref{main-result} to construct Hardy spaces of the $d$-dimensional fat Hartogs triangle and an egg domain. 

\subsection{Hardy space of the symmetrized bidisc}

%\begin{example}[Hardy space of the symmetrized bidisc] 
Consider the {\it symmetrized bidisc} given by 
\beqn
\mathbb G_2 = \{(z_1+z_2, z_1z_2) \in \mathbb C^2 : (z_1, z_2) \in \mathbb D^2\}.
\eeqn
This domain first appeared in \cite{AY1999}. 
Let $U = \mathbb D^2$ and $V = \mathbb G_2.$ Let $\sigma : \mathbb C^2 \rar \mathbb C^2$ be the involution given by $\sigma(z_1, z_2)=(z_2, z_1),$ $(z_1, z_2) \in \mathbb C^2.$ 
Clearly, $U$ is $\sigma$-invariant and $J_\sigma =-1.$ Consider the map  $\phi$ given by 
\beqn
\phi(z)= (z_1+z_2, z_1z_2), \quad z=(z_1, z_2).
\eeqn
By \cite[Theorem~5.1]{R1982}, $\phi$ is $2$-proper. Clearly, $\phi$ is $\sigma$-invariant and it maps $U$ onto $V$.  
Let $\mathscr H(U)$ be the {\it Hardy space $H^2(\mathbb D^2)$ of the unit bidisc $\mathbb D^2,$} that is, the reproducing kernel Hilbert space with the reproducing kernel $$\kappa(z, w)=\frac{1}{\prod_{j=1}^2 (1-z_j \overline{w}_j)}, \quad z=(z_1, z_2), w=(w_1, w_2) \in \mathbb D^2.$$ By Theorem~\ref{main-result}, we obtain the reproducing kernel Hilbert space $\mathscr H_\phi(V).$
Moreover, $\varGamma_\phi$ maps $\mathscr H_\phi(V)$ isometrically onto $$\mathscr H^{\sigma}_-(U)=\{f \in H^2(\mathbb D^2) : f(z_1, z_2)=-f(z_2, z_1)\}.$$  On the other hand, for $z, w \in U,$ 
\beqn
\kappa^\sigma_{-}(z, w)&\overset{\eqref{kappa-s-minus}}=&\frac{1}{2}\Big(\frac{1}{(1-z_1\overline{w}_1)(1-z_2\overline{w}_2)} - \frac{1}{(1-z_2\overline{w}_1)(1-z_1\overline{w}_2)}\Big) \\
%&=& \frac{1}{2}\Big(\frac{(1-z_2\overline{w}_1)(1-z_1\overline{w}_2) - (1-z_1\overline{w}_1)(1-z_2\overline{w}_2)}{(1-z_1\overline{w}_1)(1-z_2\overline{w}_2)(1-z_2\overline{w}_1)(1-z_1\overline{w}_2)}\Big) \\
&=& \frac{1}{2}\frac{(z_1-z_2)(\overline{w}_1-\overline{w}_2)}{(1-z_1\overline{w}_1)(1-z_2\overline{w}_2)(1-z_2\overline{w}_1)(1-z_1\overline{w}_2)}.
\eeqn  
Since $J_\phi(z)=z_1-z_2,$ 
it now follows from \eqref{formula-ii} that the reproducing kernel $\kappa_\phi$ of $\mathscr H_\phi(V)$ is given by
$$
\kappa_\phi(\phi(z), \phi(w)) = \frac{1}{2}\frac{1}{(1-z_1\overline{w}_1)(1-z_2\overline{w}_2)(1-z_2\overline{w}_1)(1-z_1\overline{w}_2)}, ~z, w \in U.
$$
This recovers \cite[Theorem 3.1]{MRZ2013} in two variables. It can be seen that $\kappa_\phi(z, w) = \frac{1}{2}\frac{1}{(1-z_2\overline{w}_2)^2 - (z_1-z_2\overline{w}_1)(\overline{w}_1-z_1\overline{w}_2)},$ $z, w \in V$ (see \cite[p.~513]{BS2018}, \cite[Corollary~2.4]{MRZ2013}).
%\end{example}

\subsection{Hardy space of the tetrablock} 
Following \cite[p.~5]{H1979}, the {\it Cartan domain of type II} in $\mathbb C^3$ is given by
$$\mathfrak R_{II} = \{(z_{1}, z_{2}, z_3) \in \mathbb C^3 : \big\|\big[\begin{smallmatrix} z_{1} & z_3 \\ z_3 & z_{2}\end{smallmatrix}\big]\big\|< 1\}.$$ 
By \cite[Eqs~(4.5.1)\&(4.7.6)]{H1979}, the kernel 
\beqn
\kappa(z, w) = \frac{1}{\mathrm{Vol}(\mathfrak R_{II})}
\det\left(I-\big[\begin{smallmatrix} z_{1} & z_3 \\ z_3 & z_{2}\end{smallmatrix}\big] \big[\begin{smallmatrix} \bar{w}_1 & \bar{w}_3 \\
\bar{w}_3 & \bar{w}_2\end{smallmatrix}\big]\right)^{-\frac{3}{2}}, \quad z, w \in \mathfrak R_{II} 
\eeqn
is positive semi-definite. The {\it Hardy space $H^2(\mathfrak R_{II})$ of the Cartan domain $\mathfrak R_{II}$ of type II} is the reproducing kernel Hilbert space associated with $\kappa$ (the equivalence of this definition with the one given on \cite[p.~521]{HM1969} follows from \cite[Theorem~4.6.1]{H1979}).
%The {\it tetrablock}, first appeared in \cite[Definition~1.1]{AWY2007}, is given by
%$$\mathbb E = \{(z_1, z_2, z_3) \in \mathbb C^3 : 1-z_1z-z_2w+z_3zw \neq 0~\mbox{for all~} z,w \in \overline{\mathbb D}\}.$$

Let $U=\mathfrak R_{II}$ and $V=\mathbb E$ (see \eqref{tetra-new}). 
Let $\sigma : \mathbb C^3 \rar \mathbb C^3$ be the involution given by $$\sigma(z_1, z_2, z_3)=(z_1, z_2, -z_3), \quad (z_1, z_2, z_3) \in \mathbb C^3.$$ 
Since $\big[\begin{smallmatrix} 1 & 0 \\ 0 & -1\end{smallmatrix}\big]\big[\begin{smallmatrix} z_{1} & z_3 \\ z_3 & z_{2}\end{smallmatrix}\big]\big[\begin{smallmatrix} 1 & 0 \\ 0 & -1\end{smallmatrix}\big]=\big[\begin{smallmatrix} z_{1} & -z_3 \\ -z_3 & z_{2}\end{smallmatrix}\big],$ $U$ is $\sigma$-invariant. Clearly, $J_\sigma =-1.$
Consider the map $\phi= (\phi_1, \phi_2, \phi_3)$ given by 
\beqn
\phi_1(z)=z_1, \phi_2(z)=z_2, \phi_3(z)=z_1z_2-z^2_3, \quad z=(z_1, z_2, z_3).
\eeqn
By \cite[Theorem 2.2]{AWY2007}, $\phi$ 
maps $U$ onto $V.$ Moreover, $\phi$ is a $2$-proper map (see \cite[Theorem~5.1]{R1982}) and $\sigma$-invariant. 
Applying Theorem~\ref{main-result} to $\mathscr H(U):=H^2(\mathfrak R_{II})$ with above choices of $\phi$ and $\sigma,$ we obtain the reproducing kernel Hilbert space $\mathscr H_\phi(V).$ Moreover, $\varGamma_\phi$ maps $\mathscr H_\phi(V)$ isometrically onto $\mathscr H^{\sigma}_-(U).$
 
%By Proposition~\ref{kernel-plus-minus}, for $z, w \in U,$ 
%\beqn
%\kappa^\sigma_{-}(z, w)&=&\frac{1}{2}\Big(\Big) \\
%&=& .
%\eeqn  

In the following discussion, we describe the reproducing kernel of $\mathscr H_\phi(V).$
A routine calculation using
$$I-\begin{bmatrix} z_{1} & z_3 \\ z_3 & z_{2}\end{bmatrix} \begin{bmatrix} \bar{w}_1 & \bar{w}_3 \\
\bar{w}_3 & \bar{w}_2\end{bmatrix} = \begin{bmatrix}
1 - z_1 \bar{w}_1 - z_3 \bar{w}_3 & -z_1 \bar{w}_3 - z_3 \bar{w}_2 \\
-z_3 \bar{w}_1 - z_2 \bar{w}_3 & 1 - z_3 \bar{w}_3 - z_2 \bar{w}_2
\end{bmatrix}
$$
shows that 
\begin{align*}
& \det(I-\big[\begin{smallmatrix} z_{1} & z_3 \\ z_3 & z_{2}\end{smallmatrix}\big] \big[\begin{smallmatrix} \bar{w}_1 & \bar{w}_3 \\
\bar{w}_3 & \bar{w}_2\end{smallmatrix}\big]) 
%&= \left( 1 - z_1 \bar{w}_1 - z_3 \bar{w}_3 \right) \left( 1 - z_3 \bar{w}_3 - z_2 \bar{w}_2 \right) - \left( -z_1 \bar{w}_3 - z_3 \bar{w}_2 \right) \left( -z_3 \bar{w}_1 - z_2 \bar{w}_3 \right) \\
%&= \left( 1 - z_1 \bar{w}_1 - z_3 \bar{w}_3 \right) \left( 1 - z_3 \bar{w}_3 - z_2 \bar{w}_2 \right) - \left( z_1 \bar{w}_3 + z_3 \bar{w}_2 \right) \left( z_3 \bar{w}_1 + z_2 \bar{w}_3 \right)\\ &=  1 - z_1 \bar{w}_1 - z_2 \bar{w}_2 - 2z_3 \bar{w}_3 + z_1 z_2 \bar{w}_1 \bar{w}_2 + z_1 z_3 \bar{w}_1 \bar{w}_3 \\ &+  z_2 z_3 \bar{w}_2 \bar{w}_3 + z_3^2 \bar{w}_3^2 - z_1 z_3 \bar{w}_1\bar{w}_3 - z_1 z_2 \bar{w}_3^2 - z_3^2 \bar{w}_2\bar{w}_1 - z_2z_3 \bar{w}_2\bar{w}_3 \\ &=  1 - z_1 \bar{w}_1 - z_2 \bar{w}_2 - 2z_3 \bar{w}_3 + z_1 z_2 \bar{w}_1 \bar{w}_2 + z_3^2 \bar{w}_3^2 - z_1 z_2 \bar{w}_3^2 - z_3^2 \bar{w}_2\bar{w}_1 \\ &= 
\\
&=1 - z_1 \bar{w}_1 - z_2 \bar{w}_2 - 2z_3 \bar{w}_3+(z_1z_2-z_3^2)(\bar{w}_1 \bar{w}_2-\bar{w}_3^2), 
\end{align*}
and hence, we have
$$\kappa(z, w) = \frac{1}{\mathrm{Vol}(\mathfrak R_{II})}  \Big[1 - \phi_1(z) \overline{\phi_1(w)} - \phi_2(z) \overline{\phi_2(w)} - 2z_3 \bar{w}_3+\phi_3(z) \overline{\phi_3(w)}\Big]^{-\frac{3}{2}}.
%(1 - z_1 \bar{w}_1 - z_2 \bar{w}_2 - 2z_3 \bar{w}_3+(z_1z_2-z_3^2)(\bar{w}_1 \bar{w}_2-\bar{w}_3^2))^{-\frac{3}{2}}.
$$
%It
%$$[\det(I-\sigma(z)\overline{W})]^{\frac{-3}{2}} = (1 - z_1 \bar{w}_1 - z_2 \bar{w}_2 + 2z_3 \bar{w}_3+(z_1z_2-z_3^2)(\bar{w}_1 \bar{w}_2-\bar{w}_3^2))^{\frac{-3}{2}} $$
Since $J_\phi(z)= -2z_3,$ it now follows from \eqref{formula-ii} that 
\begin{align*}
  &(2z_3)\kappa_{\phi}(\phi(z), \phi(w))(2\bar{w}_3) \\
%&= \frac{1}{2\mathrm{Vol}(\mathfrak R_{II})}\Big([1 - z_1 \bar{w}_1 - z_2 \bar{w}_2 - 2z_3 \bar{w}_3+(z_1z_2-z_3^2)(\bar{w}_1 \bar{w}_2-\bar{w}_3^2)]^{-\frac{3}{2}} \\ &- [1 - z_1 \bar{w}_1 - z_2 \bar{w}_2 + 2z_3 \bar{w}_3+(z_1z_2-z_3^2)(\bar{w}_1 \bar{w}_2-\bar{w}_3^2)]^{-\frac{3}{2}}\Big)\\
&= \frac{1}{2\mathrm{Vol}(\mathfrak R_{II})}\Big(\Big[1 - \phi_1(z) \overline{\phi_1(w)} - \phi_2(z) \overline{\phi_2(w)} - 2z_3 \bar{w}_3+\phi_3(z) \overline{\phi_3(w)}\Big]^{-\frac{3}{2}} \\ &- \Big[1 - \phi_1(z) \overline{\phi_1(w)} - \phi_2(z) \overline{\phi_2(w)} + 2z_3 \bar{w}_3+\phi_3(z) \overline{\phi_3(w)}\Big]^{-\frac{3}{2}}\Big).
\end{align*} 
To find a neat expression for $\kappa_\phi,$ consider the open set 
\beqn
\Omega = \{z \in \mathbb C^3 : 2|z_3|^2 < 1-|z_1|^2-|z_2|^2+|z_1z_2-z^2_3|^2\},
\eeqn 
and note that $\Omega \cap \mathfrak{R}_{II} \neq \emptyset$ (for example, consider $(1/2, 0, 0)$ or $(0, 0, 1/2)$). We claim that $\kappa_\phi(\phi(z), \phi(w))$ can be expressed solely in terms $\phi_1, \phi_2, \phi_3$ over $\Omega \cap \mathfrak{R}_{II}.$ To see this, let 
\beqn
A(z) = 1 - |\phi_1(z)|^2  - |\phi_2(z)|^2 + |\phi_3(z)|^2, ~ B(z) =  2|z_3|^2, \quad z \in \Omega.
\eeqn
Note that for $z \in \Omega \setminus Z(B),$ 
\beqn
&& \frac{1}{B(z)}\Big(\frac{1}{(A(z)-B(z))^{\frac{3}{2}}}-\frac{1}{(A(z)+B(z))^{\frac{3}{2}}}\Big) \\
&=& \frac{1}{B(z)}\frac{(A(z)+B(z))^{\frac{3}{2}} - (A(z)-B(z))^{\frac{3}{2}}}{(A(z)^2-B(z)^2)^{\frac{3}{2}}} \\
&=& 2\sum_{k=0}^{\infty} \binom{3/2}{2k+1} \frac{A(z)^{\frac{3}{2}-2k-1}B(z)^{2k}}{(A(z)^2-B(z)^2)^{\frac{3}{2}}}.
\eeqn
This is a function in $A(z)$ and $B(z)^2,$ which indeed are functions in $\phi_1, \phi_2, \phi_3.$ Hence, after polarizing the real analytic function of $\phi(z),$ we obtain a function holomorphic in $\phi(z)$ and anti-holomorphic in $\phi(w).$ This yields the formula \eqref{formula-Tetra} for $\kappa_\phi(z, w)=\kappa_{\mathbb E}(z, w)$ (modulo constant).

\subsection{Hardy space for the $d$-dimensional fat Hartogs triangle}
For a positive integer $d \Ge 2,$ Let $\triangle^d =\{(z_1, \ldots, z_d) \in \mathbb C^d : |z_1| < \ldots < |z_d| <1\}$ be the {\it $d$-dimensional Hartogs triangle} in $\mathbb C^d.$ 
Recall from \cite{CJP2023, M2021} that the {\it Hardy space $H^2(\triangle^d)$ of $\triangle^d$} is the reproducing kernel Hilbert space associated with the reproducing kernel given by 
\beqn
\kappa(z, w) = \frac{1}{1-z_d \overline{w}_d}  \prod_{j=2}^{d} \frac{1}{z_{j} \overline{w}_{j}- z_{j-1} \overline{w}_{j-1}}, \quad z, w \in \triangle_{d}.  
\eeqn
%Consider the fat Hartogs triangle $\mathbb H^d$  given by \eqref{fat-H}.
%\beqn
%\mathbb H^d = \{(z_1, \ldots, z_d) \in \mathbb C^d : |z_1|^2 < \ldots < |z_{d-1}|^2 < |z_d| <1\}.
%\eeqn
Let $U=\triangle^d$ and $V=\mathbb H^d$ (see \eqref{fat-H}). Consider the involution $\sigma : \mathbb C^d \rar \mathbb C^d$ given by 
\beq \label{sima-new}
\sigma(z)=(z', -z_d), \quad z=(z', z_d) \in \mathbb C^{d-1} \times \mathbb C = \mathbb C^d.
\eeq
Note that $J_\sigma = -1.$
Consider the map $\phi=(\phi_1, \ldots, \phi_d)$ given by 
\beq \label{phi-new}
\phi_j(z)=z_j, ~j=1, \ldots, d-1, ~ \phi_d(z)=z^2_d, \quad z=(z_1, \ldots, z_d).
\eeq
By \cite[Theorem~5.1]{R1982}, $\phi$ is $2$-proper. Clearly, $\phi$ maps $U$ onto $V$ and it is $\sigma$-invariant. Applying Theorem~\ref{main-result} to $\mathscr H(U):=H^2(U)$ with above choices of $\phi$ and $\sigma,$ we obtain the reproducing kernel Hilbert space $\mathscr H_\phi(V).$ Moreover, $\varGamma_\phi$ maps $\mathscr H_\phi(V)$ isometrically onto $\mathscr H^{\sigma}_-(U).$ Let us compute the reproducing kernel of 
$\mathscr H_\phi(V).$ 
Set $\eta(z, w)= \frac{1}{2}\prod_{j=2}^{d-1}\frac{1}{z_j\overline{w}_j-z_{j-1}\overline{w}_{j-1}},$ $z, w \in U.$ Since $\eta(\sigma(z), w)=\eta(z, w),$   
\begin{align*}
   \kappa_-^{\sigma}(z, w) &=\frac{\eta(z, w)}{(z_d\overline{w}_d-z_{d-1}\overline{w}_{d-1})(1-z_d\overline{w}_d)}+\frac{\eta(z, w)}{(z_d\overline{w}_d+z_{d-1}\overline{w}_{d-1})(1+z_d\overline{w}_d)}  \\&=  \eta(z, w)\,\frac{2(1+z_{d-1}\overline{w}_{d-1})z_d\overline{w}_d}{(z_d^2\overline{w}_d^2-z_{d-1}^2\overline{w}_{d-1}^2)(1-z_d^2\overline{w}_d^2)}, \quad z, w \in U. 
\end{align*}
Since $J_{\phi}(z)=2z_d,$ by \eqref{formula-ii}, for $z, w \in U,$
\begin{align*}
 (2z_d)\kappa_{\phi}(\phi(z), \phi(w)) (2\overline{w}_d) = \frac{2\eta(z, w)(1+z_{d-1}\overline{w}_{d-1})z_d\overline{w}_d}{(z_d^2\overline{w}_d^2-z_{d-1}^2\overline{w}_{d-1}^2)(1-z_d^2\overline{w}_d^2)}.
 \end{align*}
%\begin{align*}
% 4\kappa_{\phi}(\phi(z), \phi(w)) = \prod_{j=2}^{d-1}\frac{1}{\phi_j(z)\overline{\phi_j(w)}-\phi_{j-1}(z)\overline{\phi_{j-1}(w)}}\frac{(1+\phi_{d-1}(z)\overline{\phi_{d-1}(w)}}{(\phi_d(z)\overline{\phi_d(w)}-\phi_{d-1}(z)^2\overline{\phi_{d-1}(w)}^2)(1-\phi_d(z)\overline{\phi_d(w))}}
%\end{align*}
It is now easy to see that 
\begin{align*}
 \kappa_{\phi}(z, w) = \frac{1}{2}\frac{\eta(z, w)(1+z_{d-1}\overline{w}_{d-1})}{(z_d\overline{w}_d-z_{d-1}^2\overline{w}_{d-1}^2)(1-z_d\overline{w}_d)}, \quad z, w \in V.
\end{align*}
We refer to $\mathscr H_\phi(V)$ as the {\it Hardy space $H^2(\mathbb H^d)$} of the $d$-dimensional fat Hartogs triangle $\mathbb H^d.$ 
This terminology may be justified in view of the following proposition.
\begin{proposition} \label{Hardy-int}
For $f \in H^2(\mathbb H^d),$ $\|f\|^2_{\phi}$ is given by 
\begin{equation*}
\sup_{\underset{t_j \in (0, 1)}{j=1, \ldots, d}} \int_{[0, 2\pi]^d}  \!\Big|f\Big(\prod_{j=1}^d t_j e^{i \theta_1}, \prod_{j=2}^d t_j e^{i \theta_2}, \ldots, t_d^2 e^{2i \theta_d}\Big)\Big|^2 t^{2d+1}_d\prod_{j=1}^{d-1} t^{2j-1}_j \frac{d\theta}{2^{d-2}\pi^d}. 
\end{equation*}
\end{proposition}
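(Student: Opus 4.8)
The plan is to reduce the computation of $\|f\|_\phi^2$ to an honest Hardy-space integral over the base domain $\triangle^d$ and then push that integral forward through $\phi$ by a change of variables. By definition $\|f\|_\phi^2 = \|J_\phi \cdot f\circ\phi\|^2_{H^2(\triangle^d)}$, so the first step is to produce a boundary-integral representation for the norm on $H^2(\triangle^d)$ itself. Since the reproducing kernel $\kappa$ of $H^2(\triangle^d)$ factors as a product of one-variable Szeg\H{o}-type kernels along the nested moduli $|z_1|<\cdots<|z_d|<1$, I expect $H^2(\triangle^d)$ to be isometric (via the obvious monomial orthonormal basis) to an $L^2$-space of the $d$-torus against the weight that makes those monomials orthonormal. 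Concretely, I would first identify the reproducing kernel's power-series expansion, read off the norms $\|z^\alpha\|^2_{H^2(\triangle^d)}$ of the monomials, and match them against the moments $\int t^{2|\alpha|+\cdots} \,\prod t_j^{2j-1}\,dt$ appearing in the claimed integral; the nested weight $t_d^{2d+1}\prod_{j=1}^{d-1}t_j^{2j-1}$ and the supremum over the radii $t_j\in(0,1)$ are exactly the standard Hardy-space-on-$\triangle^d$ normalization from \cite{CJP2023, M2021}, so I would cite that identity rather than rederive it.

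Next I would transport this integral through the isometry $\varGamma_\phi$. By Theorem~\ref{main-result}(i), $\varGamma_\phi(f)=J_\phi\cdot f\circ\phi = 2z_d\,f(z_1,\ldots,z_{d-1},z_d^2)$ is a genuine isometry onto $\mathscr H^\sigma_-(\triangle^d)\subseteq H^2(\triangle^d)$, so $\|f\|_\phi^2$ equals the $H^2(\triangle^d)$-integral of $|2z_d\,f\circ\phi|^2$. Writing $z_j=\bigl(\prod_{k\ge j}t_k\bigr)e^{i\theta_j}$ in the polyradial coordinates dictated by the chain $|z_1|<\cdots<|z_d|$, the factor $|2z_d|^2=4t_d^2$ and the map $\phi_d(z)=z_d^2$ replaces $t_d e^{i\theta_d}$ by $t_d^2e^{2i\theta_d}$ inside $f$, which is precisely the argument list displayed in the proposition. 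The remaining task is bookkeeping: collect the weight $t_d^{2d-1}\prod_{j=1}^{d-1}t_j^{2j-1}$ coming from the $H^2(\triangle^d)$ normalization, multiply by the Jacobian factor $4t_d^2$, and absorb the $4$ together with the angular measure into the stated constant $\tfrac{1}{2^{d-2}\pi^d}$, upgrading the weight to $t_d^{2d+1}$.

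The main obstacle I anticipate is twofold. First, one must verify that the $\sigma$-antisymmetry of $\varGamma_\phi(f)$ does not spoil the integral representation — that is, that evaluating the $H^2(\triangle^d)$-norm of an element of $\mathscr H^\sigma_-(\triangle^d)$ via the torus integral is legitimate and that the integrand can be written cleanly in terms of $f\circ\phi$ rather than its antisymmetrization. Since $\varGamma_\phi(f)=2z_d\,f\circ\phi$ is \emph{already} odd under $z_d\mapsto -z_d$ (because $2z_d$ is odd and $f\circ\phi$ is even, as $\phi$ is $\sigma$-invariant), this is automatic and no explicit symmetrization is needed. Second, and more delicate, is justifying the supremum-over-radii formulation: one must check that the monotone limit as each $t_j\uparrow 1$ of the polyradial integrals reproduces the Hilbert-space norm, i.e. that $f\in H^2(\mathbb H^d)$ exactly when this supremum is finite and equals $\|f\|_\phi^2$. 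I would handle this by expanding $f\circ\phi$ in the monomial basis, using orthogonality of $e^{im\theta}$ on each angular circle to diagonalize the integral, and invoking monotone convergence to pass the supremum inside the resulting sum; matching the resulting series term-by-term with $\|J_\phi\cdot f\circ\phi\|^2_{H^2(\triangle^d)}=\sum_\alpha|c_\alpha|^2\|z^\alpha\|^2$ then closes the argument.
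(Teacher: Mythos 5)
Your proposal is correct and takes essentially the same route as the paper: the paper's proof is precisely the two-step argument you outline, namely combining the definition \eqref{phi-norm} of $\|\cdot\|_\phi$ with the cited integral formula \cite[Proposition~7.5(i)]{CJP2023} for the norm on $H^2(\triangle^d)$, after which the bookkeeping ($|J_\phi|^2 = |2z_d|^2 = 4t_d^2$ upgrading the weight $t_d^{2d-1}$ to $t_d^{2d+1}$ and the constant $2^{d}$ to $2^{d-2}$) gives the stated formula. Your auxiliary concerns about the $\sigma$-antisymmetry and the supremum-over-radii formulation are already subsumed in that cited formula, so no separate monomial-expansion argument is needed.
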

\begin{proof} 
This is a direct application of \eqref{phi-norm} and the integral formula \cite[Proposition~7.5(i)]{CJP2023} for the norm on $H^2(\triangle^d).$
\end{proof}
The space $H^2(\mathbb H^2)$ has been constructed in \cite[Theorem~6.2]{GGLV2021} (the case in which $m=2,$ $n=1$ and $k=1$).

\begin{proof}[Proof of Corollary~\ref{von-N}]
It is easy to see from Proposition~\ref{Hardy-int} and \cite[Corollary 5.22]{PR2016} that the multiplier algebra of $H^2(\mathbb H^d)$ is $H^{\infty}(\mathbb H^d)$ with equality of norms. This combined with \cite[Theorem~2.1]{CJP2022} completes the proof. 
\end{proof}

\subsection{Hardy space of an egg domain} \label{subs4.2}
For an integer $d \geqslant 2,$ let $\mathbb B^d$ denote the open unit ball in $\mathbb C^d.$ 
Recall that the {\it Hardy space $H^2(\mathbb B^d)$ of $\mathbb B^d$} is the reproducing kernel Hilbert space associated with the {\it Szeg$\ddot{\mbox{o}}$ kernel} $\kappa(z, w)=\big(1-\inp{z}{w}_{_{\mathbb C^d}}\big)^{-d},$ $z, w \in \mathbb B^d.$ We are interested in the {\it egg domain} $$\mathbb E_{d, 1}=\{(z_1, \ldots, z_{d-1}, z_d) \in \mathbb C^d : |z_1|^2 + \cdots + |z_{d-1}|^2 + |z_d| < 1\}$$ (the Bergman kernel of this domain appears in \cite[Section~3.2]{BFS1999}; refer to \cite{CS1985} for operator theory on general egg domains).  

Let $U=\mathbb B^d$ and $V=\mathbb E_{d, 1}.$ Let $\sigma : \mathbb C^d \rar \mathbb C^d$ be as defined in \eqref{sima-new}. 
%the involution given by 
%\beqn
%\sigma(z)=(z', -z_d), \quad z=(z', z_d) \in \mathbb C^{d-1} \times \mathbb C = \mathbb C^d.
%\eeqn
%Note that $J_\sigma = -1.$
Consider the $2$-proper map $\phi=(\phi_1, \ldots, \phi_d)$ as given in \eqref{phi-new}. 
%\beqn
%\phi_j(z)=z_j, ~j=1, \ldots, d-1, ~\phi_d(z)=z^2_d, \quad z=(z_1, \ldots, z_d).
%\eeqn 
%By \cite[Theorem~5.1]{R1982}, $\phi$ is $2$-proper. 
Clearly, $\phi$ maps $U$ onto $V$ and it is $\sigma$-invariant. Applying Theorem~\ref{main-result} to $\mathscr H(U):=H^2(U)$ with above choices of $\phi$ and $\sigma,$ we obtain the reproducing kernel Hilbert space $\mathscr H_\phi(V).$ Moreover, $\varGamma_\phi$ maps $\mathscr H_\phi(V)$ isometrically onto $\mathscr H^{\sigma}_-(U).$ We check that the reproducing kernel of 
$\mathscr H_\phi(V)$ 
is given by 
$$
\kappa_{\phi}(z, w) = \frac{1}{2}\frac{1-\inp{z'}{w'}_{_{\mathbb C^{d-1}}}}{[(1-\inp{z'}{w'}_{_{\mathbb C^{d-1}}})^2-z_d\overline{w}_d]^2}, \quad z=(z', z_d), w=(w', w_d) \in V.
$$
For simplicity, we verify this in the case $d=2.$ Note that 
\begin{align*}
   \kappa_-^{\sigma}(z, w) &= \frac{1}{2}\left[\frac{1}{(1-z_1\overline{w}_1-z_2\overline{w}_2)^2}-\frac{1}{(1-z_1\overline{w}_1+z_2\overline{w}_2)^2}\right] \\&=  \frac{2(1-z_1\overline{w}_1)z_2\overline{w}_2}{(1-z_1\overline{w}_1-z_2\overline{w}_2)^2(1-z_1\overline{w}_1+z_2\overline{w}_2)^2} \\ &= \frac{2(1-z_1\overline{w}_1)z_2\overline{w}_2}{[(1-z_1\overline{w}_1)^2-z_2^2\overline{w}_2^2]^2}.
\end{align*}
Since $J_{\phi}(z)=2z_2,$ by \eqref{formula-ii}, for $z, w \in U,$
\begin{align*}
 (2z_2)\kappa_{\phi}(\phi(z), \phi(w)) (2\overline{w}_2) = \frac{2(1-z_1\overline{w}_1)z_2\overline{w}_2}{[(1-z_1\overline{w}_1)^2-z_2^2\overline{w}_2^2]^2},
\end{align*}
which implies that
\beqn
2\kappa_{\phi}(\phi(z), \phi(w)) = \frac{1-\phi_1(z)\overline{\phi_1(w)}}{[(1-\phi_1(z)\overline{\phi_1(w)})^2-\phi_2(z)\overline{\phi_2(w)}]^2}.
 \eeqn
 It follows that
 \beqn
\kappa_{\phi}(z, w) = \frac{1}{2}\frac{1-z_1\overline{w}_1}{[(1-z_1\overline{w}_1)^2-z_2\overline{w}_2]^2}, \quad z, w \in V.
 \eeqn
We refer to $\mathscr H_\phi(V)$ as the {\it Hardy space $H^2(\mathbb E_{d, 1})$} of the egg domain $\mathbb E_{d, 1}.$ This may be justified in view of the following proposition.
\begin{proposition}
For $f \in H^2(\mathbb E_{d, 1}),$ the norm of $f$ is given by 
\beqn
\|f\|^2_{\phi} = \displaystyle \sup_{0 \leq r < 1} \int\limits_{\partial \mathbb E_{d, 1}} |f(rz', r^2z_d)|^2 \, dm(z),
\eeqn
where $m$ denotes the normalized Lebesgue measure on $\partial \mathbb E_{d, 1}.$ 
\end{proposition}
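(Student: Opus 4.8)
The plan is to combine \eqref{phi-norm} with the classical integral representation of the norm on the Hardy space $H^2(\mathbb B^d)$ of the unit ball and then to transport the resulting integral from $\partial \mathbb B^d$ to $\partial \mathbb E_{d,1}$ through the boundary restriction of $\phi,$ exactly as in the proof of Proposition~\ref{Hardy-int}. Let $\mu$ denote the rotation-invariant probability measure on $\partial \mathbb B^d.$ Since the radial means $r \mapsto \int_{\partial \mathbb B^d} |g(r\zeta)|^2\, d\mu(\zeta)$ are non-decreasing, one has $\|g\|^2_{H^2(\mathbb B^d)} = \sup_{0 \Le r < 1} \int_{\partial \mathbb B^d} |g(r\zeta)|^2\, d\mu(\zeta).$ Applying this to $g = \varGamma_\phi f = J_\phi \cdot (f \circ \phi)$ and using $J_\phi(\zeta) = 2\zeta_d$ together with $\phi(r\zeta) = (r\zeta', r^2\zeta_d^2),$ the definition \eqref{phi-norm} gives
\beqn
\|f\|^2_\phi = \sup_{0 \Le r < 1} 4r^2 \int_{\partial \mathbb B^d} |\zeta_d|^2\, \big|f(r\zeta', r^2\zeta_d^2)\big|^2\, d\mu(\zeta).
\eeqn

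Next I would carry out the change of variables governed by the two-to-one boundary map $\psi := \phi|_{\partial \mathbb B^d} : \partial \mathbb B^d \rar \partial \mathbb E_{d,1};$ that $\psi$ is onto and two-to-one off $\{\zeta_d = 0\}$ (with fibres $\{\zeta, \sigma(\zeta)\}$) follows since $|\zeta'|^2 + |\zeta_d|^2 = 1$ forces $|\zeta'|^2 + |\zeta_d^2| = 1.$ In the adapted (join) coordinates $\zeta = (\sqrt{1-t}\,\xi, \sqrt{t}\,e^{i\theta}),$ with $\xi \in \partial \mathbb B^{d-1},$ $t = |\zeta_d|^2 \in (0,1),$ and $\theta \in [0, 2\pi),$ the map $\psi$ takes the simple form $(\xi, t, \theta) \mapsto (\xi, t, 2\theta),$ that is, it fixes $\xi$ and $t$ and doubles the angle $\theta.$ Crucially, the integrand factors through $\psi,$ since $|f(r\zeta', r^2\zeta_d^2)|^2 = |f(rw', r^2 w_d)|^2$ where $w = \psi(\zeta) = (w', w_d) \in \partial \mathbb E_{d,1}.$ Thus the whole computation reduces to comparing the disintegrations of $\mu$ and of the surface measure $m$ on $\partial \mathbb E_{d,1}$ in these coordinates.

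The main obstacle is the purely measure-theoretic step of showing that the weight $|\zeta_d|^2 = t$ is precisely what converts $\mu$ into a constant multiple of $m$ under $\psi.$ Writing the induced densities as $d\mu = c_1 (1-t)^{d-2}\, dt\, d\theta\, d\mu'(\xi)$ and $dm = c_2\, \rho(t)\, dt\, d\vartheta\, d\mu'(\xi),$ where $\mu'$ is the rotation-invariant probability measure on $\partial \mathbb B^{d-1}$ and $w_d = t e^{i\vartheta},$ one must verify that $t\, c_1 (1-t)^{d-2}$ and $c_2\, \rho(t)$ agree up to a multiplicative constant; the angle-doubling $\theta \mapsto 2\theta$ supplies the two-to-one count and pushes $d\theta$ forward to $d\vartheta.$ The weight $|\zeta_d|^2 = \tfrac14 |J_\phi|^2$ is exactly the factor that cancels the vanishing of the Jacobian of $\psi$ along the branch locus $\{\zeta_d = 0\},$ so that the pushforward of $|\zeta_d|^2\, d\mu$ is a genuine (nondegenerate) multiple of surface measure; the overall constant, and hence the normalization of $m,$ is then fixed by the computation (equivalently, by testing against $f \equiv 1,$ for which $\|1\|^2_\phi = \|2z_d\|^2_{H^2(\mathbb B^d)}$). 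Finally, since $r \mapsto \int_{\partial \mathbb E_{d,1}} |f(rw', r^2 w_d)|^2\, dm(w)$ is non-decreasing and $r^2 \to 1$ as $r \to 1,$ the extra factor $4r^2$ (against the reciprocal constant produced by the change of variables) is absorbed in passing to the supremum, which yields the stated formula.
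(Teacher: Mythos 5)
Your overall route is the same as the paper's: combine \eqref{phi-norm} with the integral formula for the $H^2(\mathbb B^d)$ norm and transport the integral to $\partial \mathbb E_{d,1}$ through the two-to-one boundary restriction of $\phi$ (this is also how Proposition~\ref{Hardy-int} is proved). The paper's proof finishes by asserting the change-of-variables identity $|J_\phi(z)|^2\, d\sigma(z) = dm(z)$ under $(z', z_d) \mapsto (z', z_d^2)$. The problem with your write-up is that you stop exactly where the content lies: after the (correct) reduction to comparing the pushforward of $t\, c_1 (1-t)^{d-2}\, dt\, d\theta\, d\mu'(\xi)$ under angle-doubling with $c_2\, \rho(t)\, dt\, d\vartheta\, d\mu'(\xi)$, you write that ``one must verify'' the proportionality but never compute $\rho.$ The two substitutes you offer do not close this gap: the observation that $|\zeta_d|^2 = \tfrac14 |J_\phi|^2$ cancels the degeneracy of $\psi$ along the branch locus only shows that the pushforward is absolutely continuous with a nonvanishing density, not that this density is a constant multiple of that of $m$; and testing against $f \equiv 1$ can fix the normalizing constant only \emph{after} proportionality is known --- it can never establish proportionality.

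Moreover, the deferred verification is not routine, and in the form you state it (with $m$ the Riemannian surface measure on $\partial \mathbb E_{d,1}$) it is actually false. Parametrize $\partial \mathbb E_{d,1}$ by $w = \big(z', (1-|z'|^2)e^{i\vartheta}\big)$; a direct Gram-determinant computation gives the area element $(1-|z'|^2)\sqrt{1+4|z'|^2}\; dV(z')\, d\vartheta,$ i.e.\ in your coordinates $\rho(t) \propto t(1-t)^{d-2}\sqrt{5-4t},$ whereas the pushforward of $|\zeta_d|^2\, d\mu$ has density proportional to $t(1-t)^{d-2}$ with respect to $dt\, d\vartheta\, d\mu'(\xi).$ The nonconstant discrepancy $\sqrt{1+4|z'|^2}$ is the metric distortion of the graph, which surface measures pick up but which the full real Jacobian $|J_\phi|^2$ does not account for. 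So the identity you need holds only when $m$ is understood as (a normalization of) the pushforward measure $(1-|w_d|)^{d-2}\, dA(w_d)\, d\mu'(\xi)$ --- which is, in effect, how the paper's one-line change-of-variables assertion interprets ``normalized Lebesgue measure.'' Since your argument hinges on identifying the pushforward with $m$, and you neither perform the computation nor pin down which measure $m$ is (your heuristic actively points to the wrong one), the decisive step of the proof is missing.
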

\begin{proof} Note that 
\beqn
\|f\|^2_{\phi} &=&  \sup_{0 \leq r < 1} \int_{\partial \mathbb B^d} |f(rz', r^2z_d^2)|^2 |J_{\phi}(rz)|^2\, d\sigma(z).
\eeqn
Now after change of variables $z' \mapsto z'$ and $z_d^2 \mapsto z_d,$ we get $|J_{\phi}(z)|^2\, d\sigma(z)=dm(z).$ This gives the required formula. 
%Hence, we get
%\beqn
%\|f\|^2_{\phi} = \displaystyle \sup_{0 \leq r < 1}r^2 \int\limits_{\partial \mathbb E_{2, 1}} |f(rz_1, r^2z_2)|^2 \, d\mu(z). 
%\eeqn
\end{proof}
As a consequence of the proposition above, one can see that the multiplier algebra of $H^2(\mathbb E_{d, 1})$ is $H^{\infty}(\mathbb E_{d, 1})$ with equality of norms. Consequently, one may apply \cite[Theorem~2.1]{CJP2022} to obtain a variant of von Neumann's inequality for $\mathbb E_{d, 1}.$ 

\subsection{Concluding remarks}

The main result of this paper (see Theorem~\ref{main-result}) provides a general scheme for constructing a functional Hilbert space from a known one provided the space and domain in question have symmetries in appropriate sense. Although computing kernels involved could be computationally challenging (cf. \cite{BFS1999, GGLV2021, T2013}), this transference principle is theoretically applicable in many situations. For instance, one can apply this to the Bergman space of any of the domains considered in Section~\ref{S3}. 
We leave the details to the reader. It is worth noting that Theorem~\ref{main-result} is applicable in case $U$ and $V$ are unbounded domains.  Recall that the {\it Segal-Bargmann space} $\mathscr H(\mathbb C^d)$ is the Hilbert space of holomorphic functions $f$ in $d$ complex variables such that 
\beqn
    \|f\|^2 := \int_{\mathbb C^d} |f(z)|^2 \exp(-|z|^2) dz < \infty.
\eeqn
It is a reproducing kernel Hilbert space with the kernel
$\kappa(z, w)=\exp(\inp{z}{w}),$ $z, w \in \mathbb C^d$ (see \cite[Subsection~4.4.5]{PR2016}). 
Let $U=V=\mathbb C^d.$
%Clearly, $\phi$ maps $U$ onto $V$ and it is $\sigma$-invariant. 
Applying Theorem~\ref{main-result} to $\mathscr H(\mathbb C^d)$ with choices of $\phi$ and $\sigma$ as defined in \eqref{sima-new} and \eqref{phi-new}, respectively, we obtain the reproducing kernel Hilbert space $\mathscr H_\phi(V).$ As seen in earlier examples, one may check that the reproducing kernel of 
$\mathscr H_\phi(V)$ 
is given by 
$$
\kappa_{\phi}(z, w) = \frac{1}{4}\exp\big(\inp{z'}{w'}_{_{\mathbb C^{d-1}}}\big) \sum_{n =0}^{\infty} \frac{(z_d \overline{w}_d)^n}{(2n+1)!}, \quad z=(z', z_d), w=(w', w_d) \in V.
$$

%\vskip.2cm

{\it Acknowledgment.} We convey our sincere thanks to Gargi Ghosh for a helpful discussion on the subject matter of this paper. We are also thankful to anonymous referee for pointing out certain gaps in the original draft and also providing a proof of the reproducing kernel property of $\mathscr H_\phi(V)$ as included in the first paragraph of the Proof of Theorem~\ref{main-result}(ii). 

{}


\begin{thebibliography}{}

\bibitem{AWY2007} A. A. Abouhajar, M. C. White, N. J. Young, 
A Schwarz lemma for a domain related to $\mu$-synthesis, {\it 
J. Geom. Anal.} {\bf 17} (2007), 717-750.

\bibitem{AMY2020}  J. Agler, J. E. McCarthy and N. J. Young,  {\it Operator Analysis: Hilbert Space Methods in Complex Analysis},  Cambridge University Press, 2020. 

\bibitem{AY1999} J. Agler, N. J. Young,  
A commutant lifting theorem for a domain in $\mathbb C^2$ and spectral interpolation, {\it J. Funct. Anal.} {\bf 161} (1999), 452-477. 


\bibitem{B1982} S. R. Bell, 
The Bergman kernel function and proper holomorphic mappings, 
{\it Trans. Amer. Math. Soc.} {\bf 270} (1982), 685-691. 

\bibitem{BS2018} T. Bhattacharyya, H. Sau, 
Holomorphic functions on the symmetrized bidisk-realization, interpolation and extension, {\it J. Funct. Anal.} {\bf 274} (2018), 504-524.

\bibitem{BDGR2022} S. Biswas, S. Datta, G. Ghosh,  S. S. Roy, 
Reducing submodules of Hilbert modules and Chevalley-Shephard-Todd theorem, {\it Adv. Math.} {\bf 403} (2022), 108366, 54 pp. 

\bibitem{BGMR2019} S. Biswas, G. Ghosh, G. Misra, S. S. Roy, 
On reducing submodules of Hilbert modules with $\mathfrak{S}_n$-invariant kernels, {\it J. Funct. Anal.} {\bf 276} (2019), 751-784. 

\bibitem{BFS1999} H. P. Boas, S. Fu, E. J. Straube, 
The Bergman kernel function: explicit formulas and zeroes, 
{\it Proc. Amer. Math. Soc.} {\bf 127} (1999), 805-811.

\bibitem{CJP2022} S. Chavan, S. Jain, P. Pramanick, von Neumann's inequality for the Hartogs triangle, {\it New York J. Math.} {\bf 28} (2022), 791-799.

\bibitem{CJP2023} S. Chavan, S. Jain, P. Pramanick, Operator theory on generalized Hartogs triangles, 
{\it Res. Math. Sci.} {\bf 10} (2023), Paper No. 34, 39 pp.

\bibitem{CS1985} R. E. Curto, N. Salinas, 
Spectral properties of cyclic subnormal m-tuples, 
{\it Amer. J. Math.} {\bf 107} (1985), 113-138.

\bibitem{GGLV2021} A.-K. Gallagher, P. Gupta, L. Lanzani, L. Vivas, Hardy spaces for a class of singular domains, {\it Math. Z.} {\bf 299} (2021), 2171-2197.

\bibitem{G2021} G. Ghosh, The weighted Bergman spaces and pseudoreflection groups, {\it J. Math. Anal. Appl.} to appear. 

\bibitem{GR2024} G. Ghosh, S Shyam Roy,  Toeplitz operators on the proper images of bounded symmetric domains, 
\url{https://arxiv.org/abs/2405.08002}, 2024.

\bibitem{GZ2024} G. Ghosh and W. Zwonek, 
$2$-proper holomorphic images of classical Cartan domains, {\it Indiana Univ. Math. J.}, to appear. 

\bibitem{HM1969}  K. T. Hahn, J. Mitchell, $H^p$ spaces on bounded symmetric domains, {\it Trans. Amer. Math. Soc.} {\bf 146} (1969), 521-531.

\bibitem{H1979}  L. K. Hua, {\it Harmonic analysis of functions of several complex variables in the classical domains}, Translations of Mathematical Monographs, 6. American Mathematical Society, Providence, RI, 1979. iv+186 pp.

\bibitem{JP2000}  M. Jarnicki, P. Pflug, {\it Extension of holomorphic functions},  De Gruyter Expositions in Mathematics, 34. De Gruyter, Berlin, 2000, x+487 pp. 

\bibitem{K2001} S. G. Krantz, {\it Function Theory of Several Complex Variables}, p. 564. AMS Chelsea Publishing, Providence, RI, 2001.

\bibitem{MRZ2013} G. Misra, S. S. Roy, G. Zhang, Reproducing kernel for a class of weighted Bergman spaces on the symmetrized polydisc, {\it Proc. Amer. Math. Soc.} {\bf 141} (2013), 2361-2370.


\bibitem{M2021} 
A. Monguzzi, Holomorphic function spaces on the Hartogs triangle, {\it Math. Nachr.} {\bf 294} (2021), 2209-2231.

\bibitem{PR2016}  V. I. Paulsen and M. Raghupathi, {\it An introduction to the theory of reproducing kernel Hilbert spaces}. Cambridge Studies in Advanced Mathematics, 152. Cambridge University Press, Cambridge, 2016. 
x+182 pp. 


\bibitem{R1980} W. Rudin, {\it Function Theory in the Unit Ball of ${\bf C}^n$}, Reprint of the 1980 edition Classics in Mathematics, Springer-Verlag, Berlin, 2008, xiv+436 pp. 

\bibitem{R1982}  W. Rudin, Proper holomorphic maps and finite reflection groups, {\it Indiana Univ. Math. J.} {\bf 31} (1982), 701-720.



\bibitem{S2005} V. Scheidemann, {\it Introduction to Complex Analysis in Several Variables}, Birkhäuser, Basel, 2005. 

%\bibitem{S2015} M.C. Shaw, 
%The Hartogs triangle in complex analysis. Geometry and topology of
%submanifolds and currents, 105-115, Contemp. Math., 646, Amer. Math. Soc., Providence, RI, 2015.
%
%\bibitem{SFBK2010} B. Sz$\ddot{\mbox{o}}$kefalvi-Nagy, C. Foias,
%H. Bercovici and L. K\'erchy, {\it Harmonic analysis of
%operators on Hilbert space}, Springer, New York, 2010.

%\bibitem{T2022} G. Tsikalas, 
%A von Neumann type inequality for an annulus, 
%{\it J. Math. Anal. Appl.} {\bf 506} (2022), Paper No. 125714, 12 pp. 

\bibitem{T2013} M. Trybu\l a, 
Proper holomorphic mappings, Bell's formula, and the Lu Qi-Keng problem on the tetrablock, 
{\it Arch. Math.} (Basel) {\bf 101} (2013), 549-558. 

%\bibitem{Y2008} N. J. Young, 
%The automorphism group of the tetrablock, 
%{\it J. Lond. Math. Soc.} {\bf 77} (2008), 757-770. 
\end{thebibliography}
\end{document}